\documentclass{article}

\usepackage[english]{babel}
\usepackage[utf8]{inputenc}
\usepackage{csquotes}
\usepackage{amsmath, amssymb, xfrac, bm, bbm, mathtools, amsthm, mathabx}
\usepackage{parskip}
\usepackage{graphicx}
\usepackage{booktabs}
\usepackage{multirow}
\usepackage{enumitem}
\usepackage{hyperref}
\usepackage{array}
\usepackage{algorithm}
\usepackage{algpseudocode}
\usepackage{amsthm}
\usepackage{tikz}
\usetikzlibrary{fit,positioning,arrows.meta, calc}
\usepackage{caption}

\usepackage{float}
\usepackage[labelfont=bf]{caption, subcaption}

\usepackage[authoryear,round]{natbib}
\usepackage[top=1.5cm, left=2cm, right=2cm, bottom=3cm]{geometry}
\usepackage{xcolor}

\DeclareMathOperator{\argmin}{\arg\!\min}

\providecommand{\keywords}[1]
{
  \small	
  \textbf{Key words:} #1
}

\newtheorem{proposition}{Proposition}[section]

\newtheorem{definition}{Definition}[section]
\theoremstyle{remark}

\usepackage{tikz}
    \usetikzlibrary{arrows}
\usepackage{pgf}

\title{Inverse Optimization Without Inverse Optimization:\\
Direct Solution Prediction with Transformer Models}

\author{Macarena Navarro$^{1}$
 \and Willem-Jan van Hoeve$^{1}$
 \and Karan Singh$^{1}$}

\date{$^{1}$Tepper School of Business, Carnegie Mellon University}

\allowdisplaybreaks

\begin{document}
\maketitle

\begin{abstract}
    We present an end-to-end framework for generating solutions to combinatorial optimization problems with unknown components using transformer-based sequence-to-sequence neural networks. Our framework learns directly from past solutions and incorporates the known components, such as hard constraints, via a constraint reasoning module, yielding a constrained learning scheme.  The trained model generates new solutions that are structurally similar to past solutions and are guaranteed to respect the known constraints. We apply our approach to three combinatorial optimization problems with unknown components: the knapsack problem with an unknown reward function, the bipartite matching problem with an unknown objective function, and the single-machine scheduling problem with release times and unknown precedence constraints, with the objective of minimizing average completion time. We demonstrate that transformer models have remarkably strong performance and often produce near-optimal solutions in a fraction of a second. They can be particularly effective in the presence of more complex underlying objective functions and unknown implicit constraints compared to an LSTM-based alternative and inverse optimization.
\end{abstract}

\keywords{Inverse Optimization, Generative Neural Networks, Transformer Models, Constrained Learning, Structured Prediction} 

\section{Introduction}\label{sec:Intro}

Traditional optimization methods aim to find optimal solutions to decision problems that are specified by an objective function and a set of constraints.  Inverse optimization (IO) considers the reverse process, in which we are given a set of historical decisions and aim to find model parameters which render the historical decisions feasible and optimal~\citep{chan2025inverse,bodur2022inverse}. While IO has been studied for decades, there has been a recent surge in interest due to the abundance of available historical data coupled with the rise of data-driven IO.

Consider a classical optimization model of the form
$\min\limits_x \{ f(x) : x \in \mathcal{X} \}$, where $\mathcal{X}$ represents the feasible set of the problem.  Some parameters of the model
may be known and determined by the context or input data, while others may be estimated from historical solutions.  
For known parameters $u$ representing the context and estimated parameters $\theta$, the classical problem can be reformulated as the following {\em forward problem}:
\begin{equation}\label{eq:FP}
F(u, \theta) = \min\limits_x \{ f(x,u,\theta) : x \in \mathcal{X}(u,\theta) \},
\end{equation}
where the objective function and the feasible set are parametrized by $u \in \mathcal{U}$ and $\theta \in \Theta$.  To estimate $\theta$ we assume past data is available, of the form $\{ (\hat{x}_i, \hat{u}_i) \}_{i=1}^N$, representing $N$ historical solutions and their contexts. 
We let $\mathcal{X}^{\mathrm{opt}}(u, \theta) := \argmin\limits_x \{ f(x,u,\theta) : x \in \mathcal{X}(u,\theta)\}$ be the optimal solution set given $u$ and $\theta$.
Using the past data, we can estimate values for $\theta$ by solving the {\em inverse problem}:
\begin{equation}\label{eq:inv-prob}
\min\limits_\theta \{ g(\theta) : \theta \in \Theta^{\mathrm{inv}}(\hat{u}_i, \hat{x}_i) \,
\forall i \in \{1, \dots, N\}, \theta \in \Theta \},
\end{equation}
where $g$ is an application-specific objective function, and $\Theta^{\mathrm{inv}}(\hat{u}_i, \hat{x}_i) := \{ \theta : \hat{x}_i \in \mathcal{X}^{\mathrm{opt}}(\hat{u}_i, \theta) \}$ is the inverse-feasible set.

The above definition of IO assumes a perfect fit for $\theta$, in that it strictly enforces inverse feasibility.  In many practical applications this assumption is not realistic, because the data and past solutions are often ambiguous, partially inconsistent, or depending on human judgment.  For example, in the context of vehicle routing an optimal route may not only depend on measures such as distance or cost, but potentially also on driver preferences~\citep{merchan20242021}.
For these reasons, and motivated by the increasing availability of decision data, the recent literature has shifted towards data-driven parameter estimation that minimizes a loss function $\ell$ that penalizes a measure of violation of inverse feasibility (where $\kappa \geq 0$ trades off the original objective function and the loss):
\begin{equation}\label{eq:data-driven-IO}
\min\limits_\theta \left\{  \kappa g(\theta) 
+ \frac{1}{N} \sum_{i=1}^N \ell\left(\hat{x}_i, \mathcal{X}^{\mathrm{opt}}(\hat{u}_i, \theta)\right) : \theta \in \Theta \right\}.
\end{equation}
This approach has been effectively employed in multiple domains, including transportation and routing~\citep{bertsimas2015data,zhang2018price,zattoni2025inverse} and healthcare~\citep{chan2014generalized,aswani2019data,chan2022inverse}.

In this work, we consider optimization problems with {\em discrete decision variables}, requiring integer optimization methods for solving the forward problem.  
Current state-of-the-art approaches for mixed-integer linear forward problems are based on cutting-plane algorithms~\citep{wang2009cutting,bodur2022inverse} and stochastic first-order methods~\citep{zattoni2025inverse}.  Because of the computational overhead of solving integer optimization problems, these approaches are less scalable than continuous convex forward problems.
This computational bottleneck motivated us to revisit the assumptions of IO, and propose an alternative approach: {\em one that learns a structured prediction model based on transformers.} 

Our approach is based on an equivalent reformulation of the forward problem~(\ref{eq:FP}) as a reward-based forward problem defined as:
\begin{equation} \label{eq:RF}
\textit{RF}(u, \theta) = \min\limits_{x} \{ R(x,u,\theta) : x \in \mathcal{X}(u) \},
\end{equation}
where $R(x, u, \theta)$ is an abstract reward function that may depend on the contextual information~$u$.  
We approximate $R$ indirectly through a predictive model $M_{\phi}$, 
which for each instance $u \in \mathcal{U}$ defines a probability distribution 
$p_{\phi}(x \mid u)$ over feasible decisions $x \in \mathcal{X}(u)$. Since the predictive model's outputs are high-dimensional, and may display complex interdependencies among its components due to the demand of feasibility, we cast this problem as a structured prediction task \citep{taskar2005learning}. We employ a constraint-reasoning scheme that restricts any given autoregressive generative model to the space of feasible decisions.
We estimate the model parameters $\phi$ by minimizing a supervised loss on historical data:
\begin{equation}\label{eq:ML-loss}
\min\limits_{\phi} \frac{1}{N} \sum_{i=1}^N \ell\big(\hat{x}_i, M_{\phi}(\hat{u}_i) \big),
\end{equation}
where $\ell$ is a loss function measuring the discrepancy between the observed decisions $\hat{x}_i$ and the model-predicted decisions under $M_{\phi}(\hat{u}_i)$.  In doing so, $M_{\phi}$ learns to approximate the mapping from instances to optimal decisions, implicitly capturing both the forward and inverse problems.

Our approach offers several advantages. Firstly, once trained, $M_{\phi}$ produces feasible decisions directly from contextual input, eliminating the need to repeatedly solve an integer optimization problem at inference time. Secondly, by replacing restrictive assumptions that IO places on the functional forms of the objective function and constraints with a flexible predictive architecture, the model can capture complex or nonlinear latent structure, and in our experiments transformer-based models achieve consistently high solution quality together with substantial gains in inference speed over state-of-the-art IO methods.

There are also limitations to our approach. Because the constraint-reasoning module enforces feasibility through a combinatorial state space, identifying admissible decisions may itself be computationally hard. In our case studies, we therefore restrict our attention to monotone constraint systems that we show can be handled efficiently. Also, as a predictive method our model does not provide optimality guarantees, and typically requires more historical data than an IO method. Finally, when the underlying problem is well specified and the data uncorrupted, IO can recover the true objective exactly, whereas our approach may incur unnecessary learning overhead. As a result, the choice between the two paradigms is problem dependent, reflecting complementary strengths.

Another mainstay of data-driven optimization is predict-and-optimize and related decision-focused learning approaches \citep{wilder2019melding, elmachtoub2022smart, mandi2022decision, mandi2023decision}, which learn models tailored to downstream decision quality. However, these are not directly applicable in our setting, because they require additional supervision on the unknown components of the objective function (in our notation, $\theta$) associated with every context $u$ in the dataset, which is unavailable in our setting. Hence, we do not include predict-and-optimize methods as baselines in this study.


\noindent\textbf{Contributions.~} We propose an end-to-end approach for solving discrete optimization problems with unknown components using structured prediction as an alternative to IO. 
Our method combines transformer models, which focus on learning the latent objective function and implicit constraints, with constraint reasoning, representing the known constraints of the problem.  The constraint reasoning module masks out provably infeasible decisions during training and solution generation, thereby ensuring feasibility of the produced solutions.  
We consider IO variants of three classical combinatorial optimization problems: the weighted knapsack problem with unknown objective, the bipartite matching problem with unknown objective, and the single-machine scheduling problem with unknown precedence constraints.
To study the performance of our approach empirically, we examine several forms of learnable latent problem structure, varying along three axes: whether its underlying parameters are observable or hidden, whether their dependence on the data is deterministic or stochastic, and whether they apply globally across all observations or are instance-specific. We also consider multiple functional forms for objective functions (linear and quadratic), and different topologies of precedence constraints.
We compare our method with IO and LSTM-based structured prediction models.  We show that transformers are the only method that consistently achieves strong performance across different settings, including corrupted data and varying size of the training set.
Specifically, our method outperforms IO in almost all cases in terms of solution quality (optimality and feasibility) while generating solutions in a fraction of the time.

\section{Problem Formalization and Latent Structure}\label{sec:problem}


We next present more details on representing the forward and inverse problems as a structured prediction problem.
We discuss different types of latent problem structures, the methodological background on sequence-to-sequence transformer models, and
inductive bias.

\subsection{Problem Definition}
We consider discrete optimization problems with an unknown objective, a set of known constraints and a set of unknown constraints.  From a learning perspective, the unknown components rely not only on the exposed features (via $u$), but also on a set $\mathcal{H}$ of hidden features of the problem. For example, in a routing application the duration of a route can depend on the timing of a nearby sports event, while this information may not be available in $u$.
Because the relative importance of hidden features influences the performance of learning, we want to distinguish these explicit and latent sources of the unknown components in our empirical evaluation.  For this purpose, we introduce a hypothetical extension of the forward problem (\ref{eq:FP}), that relies on the hidden features~$h \in \mathcal{H}$ and separates the known constraints from the unknown constraints:


\begin{equation}\label{eq:RF-v2}
F'(u,h,\theta)
= \min_{x} \left\{
\begin{aligned}
   & f'(x,u,h,\theta) : \\
   & x \in \mathcal{X}(u) \cap \mathcal{X}'(u,h,\theta)
\end{aligned}
\right\},
\end{equation}



\noindent where $f'$ now also depends on $h$, $\mathcal{X}(u)$ represents the known feasible set, 
and $\mathcal{X}'(u,h,\theta)$ represents the unknown feasible set based on~$u$ and~$h$.  Both $f'$ and $\mathcal{X}'$ are hypothetical and assumed to represent the true underlying functional forms of the unknown components.

We next discuss the difference of IO and structured prediction in terms of approximating the hypothetical true functional form.
IO approximates the unknown components of the forward problem $F'$ by solving the inverse problem, resulting in coefficients $\theta$ for $f(u, \theta)$ and $\mathcal{X}(u,\theta)$.  Model misspecification occurs when the algebraic functional form of $f$ and $\mathcal{X}$ do not align with $f'$ and 
$\mathcal{X'}$.
Our structured prediction approach instead estimates the unknown components implicitly through the coefficients $\phi$ of a predictive model $M_{\phi}$ by minimizing the loss function in equation~(\ref{eq:ML-loss}). In this case, misspecification occurs when $M_{\phi}$ does not have the appropriate structure to represent the mapping from the contexts to the optimal decisions. In both cases, the learning process projects the latent structure arising from the hidden features; onto the $(u, \theta)$-space for IO, and onto the $(u, \phi)$ space for structured prediction.  In that sense, IO is more restricted by its reliance on $\theta$ to be defined over the known (algebraic) functional form.  For structured prediction, $\phi$ is defined over the architectural space represented by $M_{\phi}$ which can be of much higher dimension than the functional form of IO, allowing, in principle, for more accurate predictions.
Naturally, the performance and potential misspecification of both approaches also relies on the availability of correct and sufficient training data, which will be discussed next.

\subsection{Types of Latent Problem Structure}

The unknown components of the forward problem may depend on different combinations of the observable contextual features $u$ and the hidden features $h$ introduced above. Because these dependencies determine the form of the true value function $f'$ and the unknown feasible set $X'$, it is useful to distinguish several general types of latent structure that arise in practice. 

In the simplest case, the unknown components depend only on the exposed features $u$. This dependence may be deterministic, following a fixed rule conditioned on $u$, or stochastic, in which case the unknown components are random variables whose distributions depend on $u$. These settings are generally compatible with the algebraic models used in IO, but model misspecification may arise in presence of inconsistent training data.

The unknown components may also depend jointly on $u$ and hidden features $h$. When the mapping is deterministic, the underlying structure is consistent but not fully observable from $u$; when it is stochastic, both hidden information and random variation influence the resulting objective or constraints, introducing additional uncertainty into the optimization problem. In such settings, IO may be structurally misspecified if the hidden features cannot be represented within the assumed algebraic form, whereas a flexible model class for structured prediction may still approximate the induced mapping from $u$ to optimal decisions.

Finally, some latent structures rely on instance-specific encodings of the features, denoted $\varphi(\cdot)$, such as permutations or groupings derived from $u$ or from $(u,h)$. These encodings vary across instances and are not directly observed, requiring the predictive model to infer them implicitly from data. Such structures are difficult to capture with fixed functional forms and tend to favor flexible model architectures capable of representing higher-order interactions or relational patterns.

\begin{table}[tb]
\centering
\caption{Latent problem structures characterizing the forward problem and their applications. Function $\psi(\cdot)$ represents an instance-specific encoding of solution patterns. A detailed description of the case studies can be found in Section \ref{sec:application-problems}.\label{tab:latent-structure-forms}}
{
\resizebox{\textwidth}{!}{%
\begin{tabular}{cp{1in}p{2.6in}p{2.6in}} \toprule
Dependence & Type & Description & Case Study \\ \midrule
$u$ & Deterministic & All unknown components only depend on exposed features $u$ and have a deterministic evaluation. & Linear knapsack problem with unknown objective and single-machine scheduling problem with unknown precedence constraints.\\ \specialrule{0.1pt}{3pt}{3pt}
$u$ & Stochastic & All unknown components only depend on exposed features $u$ and have a stochastic evaluation. & Linear and quadratic bipartite matching problem with unknown objective.\\ \specialrule{0.1pt}{3pt}{3pt}
$(u,h)$ & Deterministic & The unknown components depend jointly on exposed ($u$) and hidden ($h$) features and have a deterministic evaluation. & Quadratic knapsack problem with unknown objective.\\ \specialrule{0.1pt}{3pt}{3pt}
$\psi(u)$ & Instance-specific, deterministic & The unknown components depend on an instance-specific encoding of the exposed features, such as a permutation or ordering derived from sorting $u$. & Heuristic knapsack problem 
 \textit{Alternating (choose~1, skip~1)} and \textit{Alternating (choose~2, skip~1)}.\\ \specialrule{0.1pt}{3pt}{3pt}
$\psi(u,h)$ & Instance-specific, deterministic & The unknown components depend on an instance-specific encoding formed jointly from exposed ($u$) and hidden ($h$) features. & Heuristic knapsack problem \textit{Clustering by group}. \\ \bottomrule
\end{tabular}
}
}
\end{table}

These categories provide the basis for evaluating how different learning approaches handle unknown components of varying complexity.
Table~\ref{tab:latent-structure-forms} lists the latent problem structures that we consider in our application case studies in Section~\ref{sec:application-problems}, identifying for each the associated problem setting. 

\section{Structured Prediction with Transformers}
\label{sec:method}

In this section, we present the structured prediction framework used in our approach. We first introduce the sequence-to-sequence models that learn the mapping from instances to optimal solutions. Then we describe the constraint reasoning mechanism that encodes the known feasible set.  Third, we integrate both components into a generative architecture that produces feasible solutions consistent with the observed latent structure.

\subsection{Sequence-to-Sequence Models}

Our approach requires learning a mapping from an input instance to a structured optimal solution. Sequence-to-sequence models provide a flexible mechanism to approximate such mappings directly from historical data~\citep{neubig2017neural}. They generate an output one element at a time, conditioning each prediction on the full input and on the previously generated partial solution. This autoregressive structure aligns with the constructive nature of many combinatorial solutions, which can be represented as sequences even when the underlying problems are not inherently sequential.

Recurrent neural networks were introduced to model sequential data, but their recursive computation makes it difficult to preserve information over long sequences, as gradients weaken across many recurrent steps. LSTMs (long short-term memory) are a variant of recurrent networks designed to mitigate this limitation by maintaining an internal memory state that carries information across longer spans, making them suitable for sequence-to-sequence architectures \citep{SVL-NIPS2014}. However, LSTMs still propagate information step by step, so the effective distance between two positions in the input grows with the sequence length. Transformer-based models \citep{vaswani2017attention} replace recurrence with self-attention, which directly relates all input elements to one another and shortens the path through which dependencies are learned. Because attention weights are computed over all inputs simultaneously, transformers can represent pairwise and higher-order interactions that arise in combinatorial structures, including those induced by unknown components of the optimization problem such as latent objectives or implicit constraints. Since we represent solutions as sequences, this self-attention mechanism provides a flexible way to learn these latent relationships from historical solutions while generating each decision conditioned on the full instance and the previously constructed partial solution.

However, a sequence-to-sequence model by itself does not guarantee feasibility. The decoder chooses each output based on its predictive distribution, and nothing in the architecture prevents it from selecting elements that violate the combinatorial structure of the problem. As a result, the model may generate sequences that exceed capacity limits, repeat items, omit required elements, or violate matching or scheduling constraints. These feasibility conditions typically restrict the solution space to a small, structured subset of all possible sequences, and the autoregressive mechanism does not exclude infeasible decisions during generation.

For this reason, we use the sequence-to-sequence model primarily to learn the unknown components of the mapping from inputs to optimal solutions, including latent interactions induced by unobserved objectives or implicit constraints. The known combinatorial structure is handled separately. In the next section, we introduce a constraint reasoning module that restricts the decoder's choices at each step in such a way that ensures that the generated sequences satisfy all known constraints while retaining the transformer’s ability to learn latent structure from historical data.


\subsection{Constraint Reasoning}\label{sec:CR}
Recall that the known constraints of the problem define the discrete feasible set $\mathcal{X}(u)$, for known parameters $u$.  Because our approach generates solutions sequentially, and needs to reason over partial solutions, we will assume that the feasible set is described by a deterministic finite automaton (DFA) \citep{HMU2006}.
A DFA is defined as a tuple $(Q, \Sigma, \delta, q_0, F)$, where $Q$ is a finite nonempty set of states, $\Sigma$ is a finite nonempty input alphabet (the set of labels), $\delta : Q \times \Sigma \rightarrow Q$ is the transition function, $q_0$ is the starting state, and $F \subseteq Q$ is the set of accepting (final) states.  
A DFA is a finite-state machine representing a language: a given string is processed by deterministically applying the state transition function based on the labels in the string, starting from $q_0$. If the result is a state in $F$, the string is accepted (i.e., part of the language), and otherwise rejected. To represent strings that are not accepted, there exists a unique non-accepting `sink' state $q_{\bot}$ in $Q$, which represents the maximal set of non-accepting states whose transitions are all directed to itself. Any transition that does not lead to an accepting state is directed to $q_{\bot}$. 
While DFAs are primarily used in the context of automata and languages, they have also been applied to encode the feasible set of combinatorial problems \citep{pesant2004,grammar2011}. They are closely related to the state-action space of dynamic programs, and can provide a compact, structured representation for discrete solution sets.  Any bounded set $\mathcal{X} \subseteq \Sigma^{\leq T}$, for some maximum sequence length $T$, represents a finite language, and therefore a regular language, and a DFA exists to encode it.

To represent the feasible set $\mathcal{X}(u)$ as a DFA, we first assume a fixed but arbitrary ordering on the dimensions of $\mathcal{X}(u)$.  
Each feasible point $x \in \mathcal{X}(u)$ can then be written as a sequence (or string), of discrete labels  $(\sigma_1, \sigma_2, \ldots, \sigma_k)$, where each $\sigma_i \in \Sigma$ belongs to the alphabet of possible decisions.
The DFA $(Q, \Sigma, \delta, q_0, F)$ encodes all feasibility requirements as:
\begin{itemize}
    \item $Q$: the finite set of states, representing all partial configurations or intermediate prefixes;
    \item $\delta(q, \sigma)$: the deterministic rule for transitioning between states by appending decision $\sigma$;
    \item $q_0$: the `empty' starting state (before any decision);
    \item $F$: the set of accepting states (those corresponding to feasible solutions).
\end{itemize}
Then, the set of all accepted strings corresponds exactly to $\mathcal{X}(u)$:
\begin{equation}
\mathcal{X}(u)
= \left\{\, 
\begin{aligned}
   &(\sigma_1,\ldots,\sigma_k) \in \Sigma^k : \\
   & \delta(q_i,\sigma_{i+1}) = q_{i+1}, 0 \leq i \leq k-1,\\
   & q_k \in F, k\leq T
\end{aligned}
\,\right\}.
\end{equation}

We incorporate the DFA as a constraint reasoning tool by defining a masking function that provides the eligible decisions at each stage.

\begin{definition}
Let $(Q, \Sigma, \delta, q_0, F)$ be a DFA. For a state $q \in Q$, we define the {\em mask} $m(q) \subseteq \Sigma$ as the set of labels that do not transition to the sink state $q_{\bot}$, i.e., $m(q) := \big\{ \sigma : \delta(q, \sigma) \in Q \setminus \{ q_{\bot} \} \big\}$.
\end{definition}

\begin{algorithm}[t]
\small
\caption{Sampling a solution from a DFA}
\label{alg:sampling}
\begin{algorithmic}[1]
\Require DFA $(Q, \Sigma, \delta, q_0, F)$ encoding $\mathcal{X}(u)$.
\Ensure Solution $[\sigma_1,\dots,\sigma_k]\in \mathcal{X}(u)$.
\State $\sigma = []$, $q = q_0$ \Comment{Initialize solution and starting state}
\While{true}
  \If{$q \in F$}
    \If{$m(q) = \varnothing$ or $\textsc{RandomTerminate}() = 1$} \Comment{Test for termination}
      \State \Return $\sigma$ \Comment{Return solution}
    \EndIf
  \EndIf
  \State $v \gets \textsc{Sample}(m(q))$
  \Comment{Sample a value $v$ from the mask of $q$}
  \State $\sigma\textsc{.append}(v)$ \Comment{Add $v$ to solution}
  \State $q \gets \delta(q, v)$ \Comment{Transition to next state}
\EndWhile
\end{algorithmic}
\end{algorithm}

Algorithm~\ref{alg:sampling} presents a DFA-based solution construction procedure through random sampling. Starting at the initial state $q_0$, it samples a value from the mask of the state, appends this to the solution vector $\sigma$, and transitions to the next state.  This process is repeated until an accepting state in $F$ is reached. We then return the solution if no more transitions are allowed, or if a termination test with positive probability, e.g., based on a Bernoulli trial, returns 1; otherwise, we continue.

\begin{proposition} \label{prop:sample}
Algorithm~\ref{alg:sampling} returns a solution $\sigma \in \mathcal{X}(u)$ with probability~1 assuming that each feasible transition is sampled with positive probability and $\mathcal{X}(u)$ is non-empty.
\end{proposition}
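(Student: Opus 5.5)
The argument splits into two parts. First, whenever the algorithm returns, the output lies in $\mathcal{X}(u)$. Second, the algorithm returns with probability~1.

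\textbf{Soundness.} Soundness follows from an invariant: at the top of each loop iteration, the current state $q$ equals the state reached from $q_0$ by applying $\delta$ to $\sigma$, and $q \neq q_{\bot}$. This holds initially with $\sigma=[]$ and $q=q_0$. It is preserved because each appended label $v$ is drawn from $m(q)$, so by definition of the mask $\delta(q,v) \neq q_\bot$. Returns happen only when $q \in F$, so the returned string is accepted by the DFA. By the characterization of $\mathcal{X}(u)$ as the set of accepted strings of length at most $T$, the string lies in $\mathcal{X}(u)$. We also need $|\sigma|\le T$. Since the language is contained in $\Sigma^{\le T}$, we may take the DFA trimmed, so that every non-sink state lies on a path to an accepting state. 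Then every non-sink state reachable after $k$ steps is a prefix of an accepted word, which forces $k \le T$.

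\textbf{The sampling step is well defined.} We must also check that $\textsc{Sample}(m(q))$ is never called on an empty mask. If $q \notin F$ and $q\neq q_\bot$, trimness gives a path from $q$ to $F$, so $m(q)\neq\varnothing$. If $q\in F$ and $m(q)=\varnothing$, the algorithm returns. Non-emptiness of $\mathcal{X}(u)$ guarantees $q_0\neq q_\bot$, or $q_0\in F$ in the degenerate case, so the process starts correctly.

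\textbf{Termination.} The main obstacle is termination, and I will obtain it from the length bound rather than from a probabilistic argument. By the invariant and trimness, the loop can execute at most $T$ appends, since no reachable non-sink state lies at depth greater than $T$. So within at most $T+1$ iterations the process reaches a state $q\in F$ with $m(q)=\varnothing$, or it returns earlier through the random termination test. Hence every run terminates after finitely many steps, and the probability of returning is exactly~1. The positivity assumption on the sampling probabilities is not needed for this deterministic bound.

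If one does not want to assume the DFA is trimmed, I would instead give a Borel--Cantelli/geometric argument. From any reachable non-sink state there is a path of length at most $|Q|$ to an accepting state. Each transition on that path is chosen with probability at least some $p>0$, and termination at an accepting state occurs with probability at least some $r>0$. So in every block of $|Q|+1$ steps the algorithm stops with probability at least $p^{|Q|}r>0$. The probability of never stopping is then bounded by $(1-p^{|Q|}r)^n\to 0$. This alternative is where the positivity assumptions in the statement are used.
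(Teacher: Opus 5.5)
Your proof is correct. Well-definedness and soundness match the paper: its convention that $q_{\bot}$ represents the maximal set of non-accepting states closed under transitions is exactly your trimness (every non-sink state can reach $F$), and the paper uses it the same way to get $m(q)\neq\varnothing$ whenever $q\notin F$.

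Where you genuinely differ is termination. The paper argues probabilistically: every feasible transition and the termination test have positive probability, so the induced Markov chain on $Q$ reaches $F$ and then stops almost surely. You instead combine trimness with finiteness of the language, $\mathcal{X}(u)\subseteq\Sigma^{\le T}$, to show that no non-sink state is reachable by a word longer than $T$. Since every iteration either returns or appends, every run then returns within $T+1$ iterations.

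Your route buys three things:
\begin{itemize}
\item a stronger conclusion, namely sure termination with an explicit bound;
\item no need for the positivity hypotheses;
\item an explicit check of the clause $k\le T$ in the definition of $\mathcal{X}(u)$, which the paper passes over.
\end{itemize}
It also fits how the result is used downstream. The paper derives feasibility of the masked decoder from this proposition while decoding greedily in its experiments, where the positivity hypothesis fails; your argument covers that case directly. The paper's route, in exchange, never uses the length bound. It would therefore carry over to a co-reachable DFA with cycles, where only almost-sure termination is available, and your geometric fallback reproduces exactly that case.

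Two framing corrections. First, trimming is not a harmless normalization you ``may'' perform. It does not follow from $\mathcal{X}(u)\subseteq\Sigma^{\le T}$, and the masks depend on which states are merged into $q_{\bot}$. On an untrimmed DFA the algorithm can enter a dead non-accepting state with empty mask, so you should invoke trimness as the paper's convention on $q_{\bot}$. Second, your fallback does not dispense with trimness: ``from any reachable non-sink state there is a path to an accepting state'' is precisely trimness. What the fallback actually drops is the length bound.
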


\begin{proof}[Proof.]
We wish to prove three statements: (a) the algorithm is well defined insofar that $m(q)$ is always non-empty at the invocation of line~8, (b) the algorithm terminates with probability~1, and (c) upon termination, the string produced lies in $\mathcal{X}(u)$. The last of these is immediate since, by definition of the DFA, any finite sequence that reaches a state in $F$ belongs to $\mathcal{X}(u)$, so if Algorithm~\ref{alg:sampling} terminates it returns a feasible solution.

Since $\mathcal{X}(u)$ is not empty, some state in $F$ is reachable from the starting state $q_0$. 
Consider state $q$ at any stage in the algorithm.
By application of the mask, $q$ can never be the non-accepting sink state $q_{\bot}$.
Now assume $q \notin F$.  We claim $m(q)$ is non-empty.
Suppose the opposite.  If $m(q)$ is empty, then by definition all transitions out of $q$ are directed to $q_{\bot}$. By the defining property of the sink state, this implies that $q = q_{\bot}$, contradicting the uniqueness of $q_{\bot}$.
Therefore, throughout the execution of the algorithm, either $q \in F$ or $m(q)$ is non-empty.


Lastly, because every feasible transition from a realizable state is sampled with positive probability, conditioned on the next invocation of the random termination test (line~7) being positive, the induced Markov chain on $Q$ has $F$ as its unique absorbing class and all other states are transient. Therefore, $F$ is reached with probability~1. Because the random termination test (line~7) has positive probability, the algorithm terminates with probability~1 and returns a feasible solution.
\end{proof}

We remark that, although the existence of compact DFAs, that is, ones with a small number of states, is well understood \citep{myhill1957finite,nerode1958linear} and certainly sufficient for an efficient implementation for the proposed algorithm, it is not at all necessary and being able to determine $m(q)$ point-wise, that is, for a given state $q$, is enough. Concretely, although a DFA exists for every bounded set $\mathcal{X} \subseteq \Sigma^{\leq T}$, it may be exponential in $T$.
To execute Algorithm~\ref{alg:sampling} more efficiently, we will assume that the DFA is specified implicitly via a transition rule defined over a set of {\em abstract states}, rather than by explicitly enumerating all DFA states. The concrete DFA states are then those abstract states that are reachable from the initial state under this transition rule.  This is formalized in the following definition.
\begin{definition}
Let $\mathcal{S}$ be a universe of abstract states and $\Sigma$ a finite alphabet.
A \emph{transition rule} over $(\mathcal{S},\Sigma)$ is a mapping 
$\rho : \mathcal{S} \times \Sigma \to \mathcal{S} \cup \{q_{\bot}\}$, where $\rho(q,\sigma)=q_{\bot}$ means the transition is not allowed.~The DFA defined by $\rho$ is $(Q,\Sigma,\delta,q_0,F)$, where $Q$ is the set of
states reachable from $q_0 \in \mathcal{S}$ under~$\rho$ and  $\delta(q,\sigma)=\rho(q,\sigma)$.
\end{definition}
The transition rule specifies the transition semantics over a problem-dependent abstract state space, from which the DFA materializes. For example, let $\mathcal{X}$ be the set of permutations of $\Sigma$.  We can define the transition rules as $(S, i) \rightarrow (S \cup \{i\})$ for all $i \in \Sigma \setminus S$ where the state $S \subseteq \Sigma$ represents the set of previously assigned values.
Enumerating all states and transitions explicitly yields the associated DFA representing $\mathcal{X}$. 
The drawback of using transition rules, however, is that feasibility of a partial solution is no longer guaranteed.  In our applications, we leverage the property of monotonicity of the constraint system to ensure feasibility.
\begin{definition}
A set of feasible sequences $\mathcal{X}$ is {\em monotone} if every prefix of a sequence in $\mathcal{X}$ is feasible, i.e., 
$
(\sigma_1, \dots, \sigma_k) \in \mathcal{X} \Rightarrow 
(\sigma_1, \dots, \sigma_i) \in \mathcal{X}$ for all $i \in \{1, \dots, k-1\}$.
\end{definition}
\begin{proposition} \label{prop:feasibility}
Let $\mathcal{X}(u)$ be a monotone feasible set described by a transition rule $\rho$.  If we 
replace the DFA transitions $\delta$ by $\rho$ in Algorithm~\ref{alg:sampling}, 
the algorithm returns a solution $\sigma \in \mathcal{X}(u)$.
\end{proposition}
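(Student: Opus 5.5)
We will reduce the statement to Proposition~\ref{prop:sample}, with monotonicity of $\mathcal{X}(u)$ supplying the one property the transition rule $\rho$ no longer guarantees: every state reached by a non-sink transition corresponds to a feasible prefix. First we fix how $\rho$ describes $\mathcal{X}(u)$. Let $q_0,q_1,\dots,q_k$ be the states obtained from $q_0$ by applying $\rho$ along $(\sigma_1,\dots,\sigma_k)$, with each $q_i \neq q_\bot$. We take ``$\rho$ describes $\mathcal{X}(u)$'' to mean that this holds exactly when $(\sigma_1,\dots,\sigma_k)\in\mathcal{X}(u)$, and that the reached states are then the accepting states.

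Under monotonicity this reading is consistent with the DFA semantics. If a sequence lies in $\mathcal{X}(u)$, all its prefixes lie in $\mathcal{X}(u)$. So along any admissible run of $\rho$, every intermediate state $q_i$ is itself accepting. Hence in the DFA materialized by $\rho$, $Q\setminus\{q_\bot\}=F$.

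The key steps, in order, are these.

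\textbf{Step (i): invariant.} We show by induction on the number of iterations that the current string $\sigma$ at the top of the loop satisfies $\sigma\in\mathcal{X}(u)$ and that the current state satisfies $q\in F$.
\begin{itemize}
\item \emph{Base case.} The empty string is a prefix of any feasible sequence. Since $\mathcal{X}(u)$ is non-empty and monotone (interpreting the prefix condition to include $i=0$, or treating $q_0$ as accepting), we have $q_0\in F$.
\item \emph{Inductive step.} The algorithm only samples $v\in m(q)$, so $\rho(q,v)\neq q_\bot$. By the describing property, $\sigma\cdot v$ is feasible and the new state is accepting.
\end{itemize}

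\textbf{Step (ii): well-definedness of line~8.} Line~8 is reached only when $q\in F$, $m(q)\neq\varnothing$, and the termination test failed. So the sampling is always over a non-empty set. This replaces the sink-state argument used in Proposition~\ref{prop:sample}, which relied on explicit DFA reachability.

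\textbf{Step (iii): termination.} Since $q\in F$ at every iteration, the termination test on line~4 is invoked each round. If it succeeds with probability at least $p>0$ each time, the loop ends after a geometric number of rounds, so with probability~1. Alternatively, since $\mathcal{X}(u)\subseteq\Sigma^{\le T}$, after at most $T$ appends the mask must be empty, because no feasible extension exists. The algorithm then returns deterministically. The second argument even gives termination surely.

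\textbf{Step (iv): feasibility of the output.} Upon return, $\sigma\in\mathcal{X}(u)$ by the invariant of Step~(i).

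The main obstacle is Step~(i): pinning down precisely what ``described by a transition rule'' means. In particular, we must show that a non-sink transition under $\rho$ certifies feasibility of the extended prefix rather than mere extendability. Our plan is to state this correspondence explicitly as the defining property of $\rho$. We then use monotonicity to argue that it is the natural one, since feasibility of the full sequence forces feasibility of every prefix. This makes ``accepting'' and ``non-sink'' coincide, which is exactly the gap between transition rules and explicit DFAs highlighted before the statement.
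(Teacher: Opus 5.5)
Your proof is correct, but it is not the reduction you announce in your first sentence, and it follows a different route from the paper. The paper argues that, by monotonicity, the states reachable under $\rho$ correspond exactly to feasible prefixes, with $\rho(q,\sigma)=q_\bot$ precisely when the extended prefix is infeasible. It concludes that the DFA materialized by $\rho$ accepts exactly $\mathcal{X}(u)$ and then invokes Proposition~\ref{prop:sample} as a black box.

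You make the same correspondence explicit as the meaning of ``described by $\rho$''. That is the more accurate framing, since the correspondence is really an assumption on how $\rho$ encodes $\mathcal{X}(u)$, not a consequence of monotonicity. Under your definition monotonicity in fact follows, because the sink-free run of a feasible sequence contains the sink-free runs of all its prefixes. You then prove well-definedness, termination and feasibility directly, from the invariant that the current string lies in $\mathcal{X}(u)$ and the current state lies in $F$.

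The payoff of your route is that it never uses two hypotheses of Proposition~\ref{prop:sample}: that every feasible transition is sampled with positive probability, and that the random termination test fires with positive probability. The length bound $T$ forces $m(q)=\varnothing$ after at most $T$ appends. So termination is sure rather than almost sure, and the conclusion holds for any rule that selects from the mask. That includes the greedy decoding used in the experiments, which does not satisfy the hypotheses cited in the proof of Proposition~\ref{prop:feasible-generative}. The paper's route buys brevity and the conceptual point that a monotone transition rule is just an implicit DFA for $\mathcal{X}(u)$.

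One small simplification concerns your base case. With $k=0$, your own definition already gives $\varepsilon\in\mathcal{X}(u)$ and hence $q_0\in F$, so the hedge there is unnecessary. If you prefer to exclude $k=0$, weaken the invariant to ``$q=q_0$ or $q\in F$''. Then note that $m(q_0)\neq\varnothing$, because the first symbol of any nonempty feasible sequence gives a sink-free transition out of $q_0$.
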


\begin{proof}[Proof.]
Since $\mathcal{X}(u)$ is monotone, every prefix of any $\sigma \in \mathcal{X}(u)$ is feasible, so the states reachable from $q_0$ under $\rho$ correspond exactly to feasible prefixes, and $\rho(q,\sigma)=q_{\bot}$ precisely when the extended prefix is infeasible.  Hence the DFA defined by $\rho$ accepts exactly $\mathcal{X}(u)$.  The result then follows from Proposition~\ref{prop:sample}.
\end{proof}

As a consequence of Proposition~\ref{prop:feasibility}, the computational effort of Algorithm~\ref{alg:sampling} is now embedded in the evaluation and processing of the transition rules.  In our applications, this either takes constant time or linear time in the length of the sequence.

Lastly, we note that our DFA-based approach formalizes and generalizes similar masking techniques from the literature, e.g., based on tracking vectors~\citep{kool2018attention,HuaOnt2020} or decision diagrams~\citep{jiang2022constraint}.  It is also related to 
grammar-constrained decoding for structured natural language processing~\citep{GengJP023}
and code generation~\citep{Dong0J23}, but differs in that those methods apply grammars only at decoding time, whereas our DFA is embedded directly into the model and constrains learning and generation throughout.

\subsection{Transformer Architecture with DFA-Based Constraint Reasoning}

We now combine the sequence-to-sequence model and the DFA-based constraint reasoning mechanism introduced in the previous subsections to obtain a generative architecture that realizes the structured prediction model $M_{\phi}$ defined in Section~1. The purpose of this integration is to construct a conditional distribution $p_{\phi}(x \mid u)$ over decisions $x \in \mathcal{X}(u)$ that learns the latent structure of the unknown components while ensuring that all generated solutions satisfy the known constraints.

\begin{figure}[t]
  \centering
  \scalebox{0.9}{\begin{tikzpicture}[
  >=Stealth,
  every node/.style={
    font=\small,
    execute at begin node={\setlength{\baselineskip}{2.5ex}}},
  box/.style={
    rectangle, rounded corners,
    draw=blue!70!black,
    line width=0.9pt,
    fill=white,
    align=center,
    minimum width=3.6cm,
    minimum height=0.9cm,
    inner sep=4pt
  },
  bigbox/.style={
    draw=blue!70!black,
    dashed,
    rounded corners,
    inner sep=6pt
  },
  arrow/.style={->, line width=1pt, draw=blue!70!black}
]

\node[box] (enc) {Embeddings +\\Positional encodings\\[0.75ex](processed by transformer\\encoder layers)};
\node[right=3ex of enc, yshift=5ex] (enc-label) {\bf\textcolor{blue!70!black}{\textsf{ENCODER}}};
\node[box, below=7ex of enc] (dec) {Cross attention\\ + Self attention};
\node[box, below=5ex of dec] (lin) {Linear projection};
\node[box, below=5ex of lin] (mask) {Mask};
\node[box, below=5ex of mask] (soft) {Softmax};
\node[box, below=5ex of soft] (sel) {Select $x_t$ (sample/greedy)};

\draw[arrow] (enc.south) -- node[right,xshift=3pt] {$H^{\text{enc}} \in \mathbb{R}^{n\times d}$} (dec.north);
\draw[arrow] (dec.south) -- node[right] {$h^{\text{dec}} \in \mathbb{R}^{d}$} (lin.north);
\draw[arrow] (lin.south) -- node[right] {$L_t \in \mathbb{R}^{|\Sigma|}$} (mask.north);
\draw[arrow] (mask.south) -- node[right] {$L_t^{\text{masked}} \in \mathbb{R}^{|\Sigma|}$} (soft.north);
\draw[arrow] (soft.south) -- node[right] {$p(x_t \mid u, x_{<t})$} (sel.north);

\node[left=14ex of enc] (u) {};
\draw[arrow] (u.east) -- node[above] {$u \in \mathbb{R}^{n\times k}$} (enc.west) ;

\node[left=14ex of dec] (pref) {};
\draw[arrow] (pref.center) -- node[above]{$x_{<t} \in \Sigma^{t-1}$~~~~~}(dec.west);

\coordinate (xt) at ($(sel.south)+(0,-6ex)$);
\draw[arrow,style={-, line width=1pt, draw=blue!70!black}] (sel.south) -- node[right] {$x_t = \sigma$} (xt);

\draw[arrow,style={-, line width=1pt, draw=blue!70!black}] (xt.west) -| (pref.center);

\node[box, right=14ex of mask] (cr) {Update $q_{t} = \delta(q_{t-1}, x_t)$\\
Construct mask $m(q_t)$};
\node[text=blue!70!black, above=0.5ex of cr.north] (dfa-label) {{\bf\textcolor{blue!70!black}{\textsf{DFA}}}~$(Q,\Sigma,\delta,q_0,F)$};
\node[bigbox, fit=(cr) (dfa-label)] (crbox) {};

\draw[arrow] (xt.east) -| (cr.south);
\draw[arrow] (cr.west) -- node[above] {$m(q_t) \subseteq \Sigma$} (mask.east);

\node[bigbox, fit=(enc) (enc-label)] (encbig) {};

\node[right=23ex of dec, yshift=1ex] (dec-label) {\bf\textcolor{blue!70!black}{\textsf{DECODER}}};

\node[bigbox, fit=(dec) (lin) (mask) (soft) (sel) (xt) (crbox) (dec-label) (pref)] (decbig) {};

\node[below=6ex of xt] (finish) {};
\draw[arrow] (xt) -- node[right,pos=0.75]{$[x_1,x_2, \dots, x_{T}]$} (finish);

\end{tikzpicture}}

  \caption{Transformer encoder-decoder architecture with DFA-guided constraint masking.}
  \label{fig:transformer-arch}

  \caption*{\footnotesize
  \textit{Note.} The encoder maps the instance $u \in \mathbb{R}^{n\times k}$ to contextual embeddings $H^{\rm enc}$.
  At decoder step $t$, the decoder produces logits $L_{t} \in \mathbb{R}^{|\Sigma|}$, which are masked using the feasibility
  vector $m(q_{t})$ returned by the DFA. After softmax, only feasible labels receive nonzero probability.
  The selected label $x_{t}$ (via sampling or greedy decoding) updates both the decoder prefix and the DFA state
  $q_{t} = \delta(q_{t-1}, x_{t})$.}
\end{figure}

The model employs a transformer-based encoder--decoder architecture, augmented with a DFA to restrict the decoder to feasible choices. Figure~\ref{fig:transformer-arch} provides an overview. The input instance $u \in \mathbb{R}^{n\times k}$ is first mapped to contextual embeddings $H^{enc} \in \mathbb{R}^{n\times d}$. Each element of $u$ is embedded into a shared $d$-dimensional representation using learned embeddings for categorical features and feed-forward transformations for continuous ones. Positional encodings impose a consistent ordering for the encoder. A stack of transformer encoder layers applies multi-head self-attention and feed-forward transformations to produce the contextual representations $H^{enc}$.

The decoder generates a sequence $x = (x_{1}, \ldots, x_{T})$ autoregressively. At step $t$, it embeds the prefix $x_{<t}$, applies self-attention over this prefix, and uses cross-attention to the encoder representations $H^{enc}$ to obtain a contextual decoder state.
A linear projection then produces unnormalized scores (or logits) over the alphabet $\Sigma$, which after masking and normalization define the conditional distribution $p_{\phi}(x_{t} \mid u, x_{<t})$.  Together, these conditional distributions specify the autoregressive model $M_{\phi}$.

To enforce feasibility, the decoder is coupled with the DFA $(Q, \Sigma, \delta, q_{0}, F)$ representing 
$\mathcal{X}(u)$. The DFA state evolves with the generated prefix according to $q_{t} = \delta(q_{t-1}, x_{t-1})$. At each state $q_{t}$, the DFA provides the mask of feasible labels
$m(q_{t}) = \big\{\sigma \in \Sigma : \delta(q_{t}, \sigma) \in Q \setminus \{q_{\bot}\}\big\}$.
All labels outside $m(q_{t})$ receive score $-\infty$ before normalization, ensuring $p_{\phi}(\sigma \mid u, x_{<t}) = 0$ for all infeasible labels $\sigma \notin m(q_{t})$.
Consequently, the autoregressive model assigns positive probability only to sequences whose prefixes respect the DFA transitions, i.e., $p_{\phi}(x \mid u) > 0$ only if each $x_{t}\in m(q_{t-1})$, which restricts the support of $M_{\phi}$ to $\mathcal{X}(u)$. The following result formalizes this property.
\begin{proposition}
\label{prop:feasible-generative}
Let $\mathcal{X}(u)$ be encoded by a DFA $(Q,\Sigma,\delta,q_{0},F)$.  
Any sequence generated by the masked autoregressive decoder satisfies $x \in \mathcal{X}(u)$.
\end{proposition}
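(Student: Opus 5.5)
The plan is to reduce the statement to Proposition~\ref{prop:sample} by showing that the masked autoregressive decoder is an instance of Algorithm~\ref{alg:sampling}. The key step is to check that, at each step, the decoder's sampling distribution is supported exactly on the mask $m(q_t)$. After masking, every label $\sigma \notin m(q_t)$ has logit $-\infty$, so $p_\phi(\sigma \mid u, x_{<t}) = 0$. Every label $\sigma \in m(q_t)$ has a finite logit, so the softmax gives it strictly positive probability. Hence, whenever $m(q_t) \neq \varnothing$, the normalization is well defined, and selecting $x_t$ (by sampling or greedily) is an admissible realization of $\textsc{Sample}(m(q))$ in line~8. The DFA update $q_{t} = \delta(q_{t-1}, x_t)$ then coincides with line~10.

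Next I will argue by induction on $t$ that the DFA state never becomes the sink $q_\bot$. The base case holds because $q_0 \neq q_\bot$. For the inductive step, $x_t \in m(q_{t-1})$ means by definition that $\delta(q_{t-1}, x_t) \neq q_\bot$. From the proof of Proposition~\ref{prop:sample} I reuse the argument that a non-sink state outside $F$ has a nonempty mask. Therefore the decoder never reaches a state with no admissible label before it is in $F$, and it never divides by an empty softmax.

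I also need to match the termination condition. I will take the decoder to emit an end-of-sequence action only when $q_t \in F$, so that the mask includes the stop token exactly at accepting states, or to stop when $m(q_t) = \varnothing$ with $q_t \in F$. This mirrors lines~3--5 of Algorithm~\ref{alg:sampling}. Any generated sequence $x = (x_1, \ldots, x_k)$ is then a string whose run $q_0, q_1, \ldots, q_k$ follows $\delta$ and ends in $F$, so $x \in \mathcal{X}(u)$ by the DFA characterization of $\mathcal{X}(u)$. Termination with probability one, and the $k \le T$ bound, follow from the same Markov chain argument as in Proposition~\ref{prop:sample}; for greedy decoding they follow from the maximum length $T$ built into the DFA.

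The main obstacle is the termination and stop-token convention, which the architecture description leaves implicit. If the decoder is simply truncated at length $T$, the final state might not lie in $F$. I would handle this by making the stop action part of the alphabet and masking it out at non-accepting states, so that acceptance is enforced by the mask itself. With that convention, the proposition follows directly from the mask property together with Proposition~\ref{prop:sample}; for monotone sets given by a transition rule, Proposition~\ref{prop:feasibility} applies in the same way.
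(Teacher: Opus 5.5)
Your proposal is correct and follows the paper's route: the paper's proof is the one-line reduction to Proposition~\ref{prop:sample}, and you make that reduction explicit by identifying the support of the masked softmax with $m(q_t)$, carrying the non-sink invariant, and fixing a stop-token convention so that decoding can only end in $F$. You also point out that greedy decoding does not satisfy the positive-probability hypothesis of Proposition~\ref{prop:sample}, so its termination has to come from the length bound $T$ of the finite language instead; this is a useful clarification the paper leaves implicit. One small note is that your base case $q_0 \neq q_\bot$ tacitly uses $\mathcal{X}(u) \neq \varnothing$, exactly as the paper does.
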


\begin{proof}[Proof.]
Follows immediately from Proposition~\ref{prop:sample}.
\end{proof}

Given the feasible autoregressive model, training estimates the parameters $\phi$ by minimizing the 
categorical cross-entropy loss.  At each step $t$, the loss is 
\[
\mathcal{L}(p,q)_{t}
= -\sum_{\sigma\in\Sigma} q_{t\sigma}\,\log p_{\phi}(\sigma\mid u,x_{<t}),
\]
where $q_{t\sigma}=1$ for the ground-truth label and $0$ otherwise. 
Because infeasible labels receive zero probability after masking, they contribute no gradient signal. The total loss is $\sum_{t}\mathcal{L}(p,q)_{t}$, and we optimize the model using SOAP~\citep{vyas2024soap}, an optimizer designed for transformer architectures that provides improved training stability.

Once the model is trained, solution generation proceeds through the decoder together with the DFA-guided mask.
Given a new instance $u$, the model computes $p_{\phi}(x_{t}\mid u, x_{<t})$ at each step $t$, applies the feasibility mask $m(q_{t})$, and selects a feasible label to extend the prefix. 
In our experiments we adopt greedy decoding: at each step the label $x_t \in m(q_t)$ with maximum probability is chosen, after which the prefix and DFA state are updated before proceeding to $t+1$.
This process continues until a complete sequence is produced.
Single-sample decoding is also possible but yields inferior performance in our experiments.

Although the DFA-based mask is active in both training and inference, its function differs across these stages.
During training, the mask removes infeasible labels from the softmax, reducing the effective decision space.  During inference, the mask fully determines the set of admissible next labels, ensuring that every generated sequence satisfies the known constraints encoded by $\mathcal{X}(u)$.
These elements, together with the encoder–decoder model, define the predictive framework used in subsequent sections.
The next subsection examines the inductive biases associated with this representation and learning scheme.

\subsection{Inductive Bias Considerations} 
In machine learning, an inductive bias is the set of assumptions that guide a model’s ability to generalize beyond the training data \citep{bengio2013representation}. These assumptions determine how the model selects among competing hypotheses to explain the data. Inductive biases can arise from the architecture itself (architectural bias), from how the input and output are represented (representation bias), or from the learning dynamics of the training procedure (algorithmic bias), such as the choice of optimizer and related hyperparameters. In our setting, these biases shape how well models capture latent problem structures. We account for these biases when determining methodological aspects such as input/solution encoding, architecture, optimizer, and hyperparameters.

\section{Application to Three Combinatorial Problems}
\label{sec:application-problems}

We next instantiate our structured prediction framework for three combinatorial optimization problems with unknown components: the knapsack problem with an unknown reward function, the bipartite matching problem with an unknown objective function, and the single-machine scheduling problem with release times and unknown precedence constraints. These problems span increasing levels of structural complexity, allowing us to examine how the model handles unknown objectives, instance-specific latent structure, and hidden feasibility constraints. Each problem requires distinct methodological choices regarding input representation, output encoding, and DFA construction, which we describe in the subsections below.

\subsection{Knapsack Problem with an Unknown Reward Function}
\label{ss:kp}

We study knapsack problems with an unknown reward function. The classical 0–1 knapsack with a linear objective is NP-hard but admits a pseudo-polynomial-time dynamic program, whereas the quadratic knapsack is strongly NP-hard~\citep{kellerer2004multidimensional}.

Let $U$ be a set of $n$ elements, where each element $j \in U$ is characterized by a weight $w_{j} \in \mathbb{R}^{+}$ and an individual reward $c_{j}^{1} \in \mathbb{R}^{+}$. In addition, we define quadratic rewards $c_{jk}^{2} \in \mathbb{R}$ for elements $j, k \in U$.  Each element $j$ belongs to a group with label $g_j \in G$, where $G$ denotes the set of possible groups. Let $B \in \mathbb{R}^{+}$ be the maximum capacity of the knapsack.  
As decision variables, we define the binary vector $y \in \{0,1\}^{|U|}$, where $y_{j} = 1$ indicates the inclusion of element $j \in U$ in the knapsack. 
In this setting, both the functional form of the objective and the parameters that specify it are unknown to the decision maker. Note that the vector $c = (c^{1}, c^{2})$ is unknown to the decision maker and determines only the objective function, and not the feasible set. We consider three variations of the knapsack problem, all subject to the weight capacity constraint. The \textit{linear knapsack} maximizes the sum of individual rewards $c^{1}$:
\begin{equation}\label{eq:lin-knap}
    \textbf{KP-L}(c^{1},u) = 
\max_{\, y\in\{0,1\}^n}\; \Bigl\{\sum_{j=1}^{n} c^{1}_{j} y_{j}
\;:\; w^\top  y\leq B\Bigr\}
\end{equation}

The \textit{quadratic knapsack} extends it by incorporating pairwise rewards $\mathbf{c}^{2}$: 
\begin{equation}
\label{eq:quad-knap}
\begin{alignedat}{2}
\textbf{KP-Q}(c,u) \;=\;
& \max_{y \in \{0,1\}^{n}} \; & &
   \sum_{j=1}^{n} c^{1}_{j} y_{j}
   + \sum_{\substack{j,k=1\\ j\neq k}}^{n} c^{2}_{jk} y_{j} y_{k} \\
& \text{s.t.} \; & & w^{\top} y \le B
\end{alignedat}
\end{equation}

The {\em heuristic knapsack} reflects a decision maker with bounded rationality that uses a heuristic rule, reflected in a function $f_{\mathrm heur}$,  to identify solutions:
\begin{equation}
\label{eq:heur-knap}
\begin{alignedat}{2}
\textbf{KP-HEUR}(u) \;=\;
& \max_{y \in \{0,1\}^{n}} \; & & f_{\mathrm{heur}}(y) \\
& \text{s.t.}\; & & w^{\top}y \le B
\end{alignedat}
\end{equation}


\noindent \textbf{Data Representation and Other Methodological Choices.~} An instance is given by a subset of elements $U = \{e_{1},..., e_{n}\}$, a list of associated weights $w_j$ for each element $j \in U$, and a capacity $B$.  An associated solution is a subset of elements $S \subseteq U$, say $S = \{s_1, \dots, s_k\}$. We encode these as the vectors $u_{1} = (e_1, \dots, e_n)$, $u_{2} = (w_1, \dots, w_n)$, $u_{3} = (B)$, and
$x = (s_1, \dots, s_k)$. Here, the hidden features are the groups, $h = (g_{1},..., g_{n})$.

Since the knapsack problem is not inherently sequential, we introduce a preprocessing step that imposes a sorting rule $\rho_{\text{knap}}$ on both input and output sequences. Specifically, for each element, we compute its empirical frequency of inclusion in solutions, subject to its inclusion in the problem instance, and sort elements in descending order of these frequencies. This ordering induces a permutation $\pi_{\text{knap}}$ that provides a consistent sequential representation across instances. Then, the input context vector corresponds to $u = u_{1}^{\pi_{\text{knap}}}\,\|\,u_{2}^{\pi_{\text{knap}}}\,\|\,u_{3}$ and its solution $x^{\pi_{\text{knap}}}$.



\medskip 

\noindent\textbf{Constraint Reasoning.~} The feasible set for the knapsack problem consists of subsets of items whose total weight does not exceed the capacity $B$. We represent solutions as sequences in which items may be selected in arbitrary order, without repetition. We define the associated DFA as follows. Each state is a pair $(S,r)$ where $S \subseteq U$ denotes the set of items already selected and and $r \in [0,B]$ is the remaining capacity. The alphabet is $\Sigma =  U$. We define the initial state as $q_{0} = (\varnothing, B)$. The transition rule is defined as: $\rho((S,r), j) = (S \cup \{j\}, r - w_{j})$, for all $j \in U\setminus S$  if $r - w_j \geq 0$. Selection may terminate at any state. Monotonicity holds because any partial selection that respects the remaining capacity is feasible. Note that although this DFA has exponentially many states, we only materialize a single path at each iteration. During decoding, feasibility is enforced by maintaining the set of previously selected items and the remaining capacity, and masking actions that would violate either constraint. As a result, each transition can be evaluated and processed in constant time.


\medskip

\noindent\textbf{Instance and Solution Generation.~} We construct a universe of $n$ elements and generate weights and capacities using the procedure proposed by~\cite{pisinger2005hard}, which ensures that the instances are challenging and diverse. Each instance consists of a subset of elements and a corresponding capacity. For more details, see Appendix~\ref{sec:appendix-instances-knap}.

To define the reward functions, we introduce linear and quadratic forms, presented in Table~\ref{tab:functional-forms}. Linear rewards depend solely on element weights, while quadratic rewards capture pairwise interactions between elements, based on group labels. In particular, the quadratic value function reflects a practical application in marketing strategies in which elements represent customers. Inspired by a niche marketing strategy, it divides elements into groups corresponding to market segments, penalizing selections across different groups while rewarding selections within the same group.



\begin{table}[tb]
\centering
\small
\caption{Linear and quadratic reward value functions\label{tab:functional-forms}}
\begin{tabular}{@{}llp{6.5cm}@{}}
\hline
\textbf{Reward type} & \textbf{Name} & \textbf{Function} \\
\hline
\multirow{2}{*}{Linear reward $c^{1}_{j}$}
    & Inverse proportionality & $c^{1}_{j} = \frac{1}{w_{j}}$ \\
\cline{2-3}
    & Logarithmic             & $c^{1}_{j} = \log(w_{j})$ \\
\hline
Pairwise reward $c^{2}_{jk}$ 
    & Including negative &
        $c^{2}_{jk} = 0 \quad \forall j,k: j = k$ \newline
        $c^{2}_{jk} = q_1(c^{1}_{j} + c^{1}_{k}) + q_2 \quad \forall j,k: g_{j} = g_{k}$ \newline
        $c^{2}_{jk} = -q_3(c^{1}_{j} + c^{1}_{k}) \quad \forall j,k: g_{j} \neq g_{k}$ \\
\hline
\end{tabular}
\caption*{\footnotesize $q_1, q_2$, and $q_3$ are constants.}
\end{table}


We also study heuristic reward functions that induce instance-specific solution patterns not captured by linear or quadratic value functions.  
These correspond to instantiations of the instance-specific encodings $\psi(u)$ and $\psi(u,h)$ introduced in Table~\ref{tab:latent-structure-forms}.  Specifically, we use the following heuristic value functions $f_{\mathrm heur}$ with increasing cluster patterns, each of which is applied to the set of elements $U$ in an instance:
\begin{itemize}
    \item \textit{Alternating (choose 1, skip 1)}: We select every other element in ascending index order until the capacity is reached. Such instances test the model's ability to learn simple positional patterns.
    \item \textit{Alternating (choose 2, skip 1)}: Similar to the previous rule, but we select two consecutive elements, skipping one element after every pair. Such instances stress the importance of learning two-element positional clusters, in addition to the positional pattern.
    \item \textit{Clustering by group}: For each instance, based on the elements group labels, we select the group with the largest presence, sort its elements by their index, and add them to the knapsack until capacity is met. These test the ability to learn group-based clustering.
\end{itemize}


After generating the instances, we obtain their corresponding solutions using two approaches. 
For the linear and quadratic knapsack problems, we solve the associated optimization problem for each instance, i.e., given $c = (c^{1}, c^{2})$, we solve the optimization problems defined by \textbf{KP-L}$(c^{1}, u)$ and \textbf{KP-Q}$(c, u)$, respectively. Linear instances are solved quickly in practice. In contrast, some of the quadratic instances are difficult to solve optimally, in which case we use the best solution found within the time limit (10 min).  If no solution is found, we ignore the instance.
For the heuristic knapsack problem, we apply the heuristic rule to obtain the solution associated to each instance.

\subsection{Bipartite Matching Problem with an Unknown Objective Function}

We study the bipartite matching problem with an unknown objective. The linear bipartite matching is solvable in polynomial time, for example, via the Hungarian algorithm. The quadratic bipartite matching is in general NP-hard by reduction from 0-1 quadratic programming, which is NP-hard~\citep{caprara2008constrained}.

The bipartite matching problem arises naturally in a wide range of applications that require assigning elements from one set to another. It considers a bipartite graph $G = (V_{1}, V_{2}, E)$, where $V_{1}$ and $V_{2}$ correspond to the set of vertices on the left-hand and right-hand side, respectively. $E \subseteq V_{1} \times V_{2}$ corresponds to the set of edges, where each edge is associated with a non-negative reward $c^{1}_{jk} \in \mathbb{R}^{+}$ \citep{lovasz2009matching}. Additionally, we consider quadratic rewards $c^{2}_{(jk),(lm)} \in \mathbb{R}$. As decision variables, we define binary variables $y_{jk} \in \{0,1\}$ for each edge $(j,k) \in E$, where $y_{jk} = 1$ indicates the inclusion of edge $(j,k)$ in the matching. Let $\delta(j)$ be the set of edges incident to vertex $j$.

Similarly to the knapsack problem, both the functional form and the parameters that specify it are unknown to the decision maker. Here, $c = (c^{1}, c^{2})$ is unknown to the decision maker and determines only the objective function. We consider two variations of the bipartite matching problem,
both subject to the standard matching constraints. The \textit{linear bipartite matching} maximizes the
sum of individual rewards $c^{1}$:
\begin{equation}
\label{eq:lin-matching}
\begin{alignedat}{2}
\textbf{BM-L}(c^{1}, u) \;=\;
& \max_{y \in \{0,1\}^{|E|}} \; & & \sum_{e\in E} c^{1}_{e}\, y_{e} \\[2pt]
& \text{s.t.}\; & &
  \sum_{e\in \delta(k)} y_{e} \le 1\quad \forall k\in V_{2} \\
& & &
  \sum_{e\in \delta(j)} y_{e} \le 1\quad \forall j\in V_{1}
\end{alignedat}
\end{equation}

The \textit{quadratic bipartite matching} extends it by incorporating pairwise rewards $c^{2}$:
\begin{equation}
\label{eq:quad-matching}
\begin{aligned}
\textbf{BM-Q}(c,u)
= \max_{y \in \{0,1\}^{|E|}} \;&
   \sum_{e\in E} c^{1}_{e}\, y_{e}  \\
& {}+ \sum_{\{e,f\}\subseteq E} c^{2}_{ef}\, y_{e} y_{f} \\[2pt]
\text{s.t.}\quad &
   \sum_{e\in \delta(k)} y_{e} \le 1 \quad \forall k\in V_{2}, \\[2pt]
&
   \sum_{e\in \delta(j)} y_{e} \le 1 \quad \forall j\in V_{1}.
\end{aligned}
\end{equation}

\noindent \textbf{Data Representation and Other Methodological Choices.~} Each node $j \in V_{1} \cup V_{2}$ belongs to a group with label $g_j \in G$, where $G = \{1,...,m\}$ denotes the set of possible groups. These groups are fixed across all instances. Each instance is defined by a subset of nodes on the left-hand side $S_{1} \subseteq V_{1}$, a subset of nodes on the right-hand side $S_{2} \subseteq V_{2}$, and the corresponding set of edges $S_{E} = \{(j,k): j \in S_{1}, k \in S_{2}\}$.

We adopt the same encoding scheme as in the knapsack problem. Each instance is defined by a subset of edges $U = \{e_{1},..., e_{r}\}$, along with two vectors specifying the group labels of the left-hand and right-hand nodes associated with each edge, respectively. An associated solution corresponds to a subset $S \subseteq U$ of edges. We encode these as the vectors $u_{1} = (e_{1},..., e_{r})$, $u_{2} = (g_{1},..., g_{r})$, $u_{3} = (g_{1},..., g_{r})$, and $x = (e_{1},..., e_{l})$, respectively.

This optimization problem is also not inherently sequential, so we impose an ordering $\rho_{\text{match}}$, similar to the one in the knapsack application, that induces a permutation $\pi_{\text{match}}$ on the input and output data. For each edge, we compute its empirical frequency of occurrence in the solution, conditional on being present in the instance, and sort all edges in descending order of these frequencies. Then, we have $u = u_{1}^{\pi_{\text{match}}}\,\|\,u_{2}^{\pi_{\text{match}}}\,\|\,u_{3}^{\pi_{\text{match}}}$ and $x^{\pi_{\text{match}}}$.

\medskip

\noindent\textbf{Constraint Reasoning.~} The feasible set for the matching problem consists of subsets of edges forming a matching, i.e., no two selected edges share an endpoint. We represent solutions as sequences in which edges may be selected in arbitrary order, subject to matching constraints. We define the associated DFA as follows. Each state is a subset $R \subseteq U$ representing the remaining eligible edges. The alphabet is $\Sigma =  U$. We define the initial state as $q_{0} = U$. For each edge $i = (u,v) \in U$, we define its neighborhood as the set of edges
$N_i := \{ (u', v') : (u', v') \in U \wedge ((u' = u) \vee (v' = v)) \}$. The transition rule is defined as: $\rho(R, j) = R \setminus N_{j}$ for all $j \in R$. The set of terminal states is defined as $F = \{ R : R \subseteq U \}$, allowing to terminate at any state. Monotonicity holds because any partial set of non-conflicting edges is feasible. As in the previous application, although this DFA has exponentially many states, we only materialize a single path at each iteration. During decoding, feasibility is enforced by maintaining the set of selected edges and masking all conflicting edges. Observe that each transition function can be evaluated and processed in $O(|U|)$ time.


\medskip

\noindent\textbf{Instance and Solution Generation.~} We consider bipartite matching instances defined on a complete bipartite graph with left-side nodes $V_{1}$ and right-side nodes $V_{2}$. Each instance corresponds to a subset of feasible edges $S_{E} \subseteq E$. Rewards are based on group-dependent interactions (see Table~\ref{tab:group_rewards-matching}). Linear rewards depend on pairwise group affinities between endpoints, while quadratic rewards include pairwise interactions (see Appendix~\ref{sec:appendix-instances-match} for more details). 

\begin{table}[tb]
\centering
\small
\caption{Linear and quadratic rewards for the bipartite matching problem.\label{tab:group_rewards-matching}}
\begin{tabular}{llp{7.5cm}}
\hline
\textbf{Reward type} & \textbf{Name} & \textbf{Function} \\
\hline
Linear reward $c_{jk}^{1}$ for edge $(j,k)$ & Group-based & 
$c_{jk}^{1} \sim \mathcal{U}(a_d, b_d)$, where $d = |g_{j} - g_{k}|$, and $(a_d, b_d)$ defines the reward range for group distance $d$. \\
\hline
\begin{tabular}[l]{@{}r@{}}Pairwise reward $c_{(jk),(lm)}^{2}$\\for edges $(j,k)$ and $(l,m)$\end{tabular} & Diversity-based & 
$c_{(jk),(lm)}^2 =
\begin{cases}
-\gamma, & \text{if } g_j = g_k = g_l = g_m\\
+\gamma, & \text{if } g_l = g_m \text{ and exactly one of } g_j, g_k = g_l \\
0, & \text{otherwise}
\end{cases}$ \\
\hline
\end{tabular}
\caption*{\footnotesize $\mathcal{U}$ denotes a uniform distribution. Each job $j$ has an integer group label $g_{j} \in \mathbb{Z}^{+}$.}
\end{table}


For each instance, we obtain their corresponding optimal solution by solving its associated optimization problem. That is, given $c = (c^{1}, c^{2})$, we solve the forward problem, which, depending on the underlying objective function, corresponds to $\textbf{BM-L}(c^{1}, u)$ or $\textbf{BM-Q}(c, u)$.

\subsection{Single-Machine Scheduling Problem with Release Times and Unknown Precedence Constraints}

Next we consider a scheduling problem in which the unknown components lie within the constraints. In particular, we study the version that minimizes the sum of completion times, which together with arbitrary precedence constraints, is strongly NP-hard \citep{lenstra1977complexity}.

Let $U$ be a set of $n$ jobs, where each job $j \in U$ is characterized by a group label $g_{j} \in G$, with $G$ denoting the set of groups, and a processing time $p_{j} \in \mathbb{Z}^{+}$. These values are fixed across all instances in our dataset. Each instance consists of a subset $S \subseteq U$ of jobs. For each instance, each job $j \in S$ is characterized by a release time $r_{j} \in \mathbb{Z}^{+}$.

We define the decision variables as follows. Let $z \in \{0,1\}^{|S|\cdot |S|}$ be a binary matrix where $z_{jk} = 1$ indicates that job $j$ is scheduled before job $k$. For each job $j \in S$, let $s_{j} \in \mathbb{Z}^{+}_{0}$ denote its starting time, and $w_{j} \in \mathbb{Z}^{+}_{0}$ its completion time. The objective is to minimize the average completion time, and preemption is not allowed. Additionally, we incorporate group-based precedence constraints denoted in $\mathcal{Q}$, that are fixed across all instances but unknown to the decision maker. Let $M = \sum_{j \in S} r_{j} + p_{j}$. The corresponding optimization problem is presented in equation~(\ref{eq:single-machine}).
\begin{equation}
\label{eq:single-machine}
\begin{alignedat}{2}
\textbf{SMS}(\mathcal Q, u)
&= \min_{s \in \mathbb{R}^{+},w \in \mathbb{R}^{+},z \in \{0,1\}^{|S|\cdot |S|}}\sum_{j\in S} w_j  && \\[2pt]
\forall j\in S:\;      &\quad w_j = s_j + p_j \\[-2pt]
\forall j\in S:\;      &\quad s_j \ge r_j \\
\forall j\ne k\in S:\; &\quad s_k \ge w_j - M(1- z_{jk}) \\
\forall j\ne k\in S:\; &\quad s_j \ge w_k - M z_{jk} \\
\forall j\ne k\in S:\; &\quad z_{jk} + z_{kj} = 1 \\
\forall j\in S:\;      &\quad z_{jj} = 0 \\
                       &\quad z\in\mathcal Q
\end{alignedat}
\end{equation}

\noindent \textbf{Data Representation and Other Methodological Choices.~} Since release times vary across instances, we do not impose a fixed input order and therefore omit positional encodings in the transformer model input. On the output side, the model predicts a job ordering that corresponds to the scheduling solution, making output position inherently meaningful.

An instance is given by a subset of jobs $S = \{e_1,\dots,e_k\}$, a list of associated processing times $p_j$, a list of their release times $r_{j}$, and a list of their groups $g_{j}$ for each job $j \in S$. An associated solution is an ordered subset of jobs in $S$. We encode these as the vectors $u_{1} = (e_{1},...,e_{k})$, $u_{2} = ( p_{1},...,p_{k})$, $u_{3} = (r_{1},...,r_{k})$, $u_{4} = (g_{1},..., g_{k})$, and $x = (s_{1},...,s_{k})$, respectively. The input context vector is $u = u_{1}\,\|\,u_{2}\,\|\,u_{3}\,\|\,u_{4}$. Note we do not encode precedence constraints as we assume they are unknown to the decision maker. Given that a schedule can be represented as a permutation of jobs, our constraint reasoning maintains a tracking vector that enforces only permutation feasibility during decoding. Release time constraints are enforced \textit{ex post} when computing performance metrics.

\medskip

\noindent\textbf{Constraint Reasoning.~} The feasible set for the scheduling problem consists of vectors of length $k$ representing a permutation of jobs in $U$.
We define the associated DFA as follows.  Each state is defined by a set $R \subseteq U$ representing the remaining eligible jobs.  The alphabet is $\Sigma =U$.  We define the initial state as $q_0 = U$.
The transition rule is defined as:
$\rho( R, j ) = R \setminus \{j\}$ for all $R \subseteq U$ and $j \in R$.
The set of terminal states is defined as $F = \{ \varnothing \}$, ensuring that all jobs are scheduled in the permutation.  Monotonicity holds because any partial permutation of jobs is feasible.  As before, we only materialize a single path in the DFA at each iteration.  Observe that each transition function can be evaluated and processed in constant time.

\medskip

\noindent\textbf{Instance and Solution Generation.~} We consider scheduling instances defined over a universe of jobs, each characterized by a group label, a processing time, and a release time. Release times and processing times are generated using standard benchmark procedures (see Appendix~\ref{sec:appendix-instances-scheduling} for details). We also introduce group-based precedence constraints, represented as a directed acyclic graph over job groups, and shared across all instances. After generating the instances, we obtain their corresponding solutions by solving \textbf{SMS}$(\mathcal{Q}, u)$ to optimality.



\section{Algorithmic Benchmarks}
\label{sec:algorithmic-benchmarks}

We evaluate our proposed method against two competing approaches: a state-of-the-art IO approach and an alternative ML-based approach. 

\subsection{Inverse Optimization Approach}
\label{ss:io-approach}

IO seeks to infer objective-function parameters that make the historical decisions (approximately) optimal for the forward problem introduced in Section~\ref{sec:Intro}. Rather than learning the solution mapping directly from data, IO assumes a parametric form for the unknown objective $f_{\mathrm{inv}}$ and uses the input--solution pairs $\{(\hat u_i,\hat x_i)\}_{i=1}^N$ to estimate its parameters. This requires solving a backward problem to determine the parameters and repeatedly solving the associated forward problem within the algorithmic procedure.

IO is a well-established research area in operations research, see \citet{heuberger2004inverse} and \citet{chan2025inverse} for surveys. Early work includes inverse shortest path and network flow problems \citep{ahuja2001inverse}, as well as approaches based on decomposition techniques such as column generation to handle large feasible sets \citep{yang1999two}. The computational complexity of IO is not explained solely by the difficulty of the forward problem: even when the forward combinatorial problem is solvable in polynomial time, the corresponding inverse problem can be NP-hard \citep{cai1999complexity}. As a result, practical IO methods for discrete problems typically rely on iterative cutting-plane or constraint generation schemes that repeatedly solve the forward problem \citep{bodur2022inverse, wang2009cutting}.

We adopt the multipoint trust-region cutting-plane method of \citet{bodur2022inverse}. Its data-driven backward model, \textbf{IO}$(\lambda,\{\hat u_i,\hat x_i\}_{i=1}^N)$, is
\begin{align}
\min_{\theta, \theta_{i}} \quad & 
\|\theta^0 - \theta\|_1 
+ \lambda \sum_{i=1}^{N} \|\theta_{i} - \theta\|_1 \notag \\
\text{s.t.} \quad 
& f_{\text{inv}}(\hat{x_{i}}, \hat{u_{i}}, \theta_{i}) 
\geq f_{\text{inv}}(x, \hat{u_{i}}, \theta_{i}) 
\notag \\
& \forall i = 1, \dots, N,\;
\forall x \in \mathcal{E}(\mathcal{X}(\hat{u_{i}}, \theta_{i})).
\label{eq:inverse-optimization}
\end{align}
where $\mathcal{E}(\mathcal{X}(\hat u_i,\theta_i))$ denotes the extreme points of the feasible region in the forward problem. The algorithm alternates between a master problem, which is a relaxation of \textbf{IO}$(\lambda,\{\hat u_i,\hat x_i\}_{i=1}^N)$, and a subproblem for each observation that solves the forward problem under candidate parameters~$\theta_i$. An initial guess $\theta^0$ is required, and performance depends on the assumed functional form of $f_{\mathrm{inv}}$ and the complexity of the forward problems.

In our experiments, this IO method is used as a baseline for comparison. All problem-specific forward models, parametric objective families, initialization schemes, and objective variants are provided in Appendix~\ref{sec:appendix-algo-bench}.

\subsection{LSTM-based Approach}

We also compare our method with a sequence-to-sequence LSTM model implemented within the same structured prediction framework. In this baseline, the transformer encoder–decoder is replaced by an LSTM encoder–decoder, while all other components of the pipeline remain identical: the input encoding, output representation, autoregressive decoding mechanism, and DFA-based constraint reasoning follow exactly the same design. The LSTM model is trained using the same cross-entropy loss on the observed solution sequences. Because LSTMs impose a different architectural inductive bias than transformers---relying on recurrent state updates rather than self-attention---they provide a natural and meaningful comparator for assessing how model architecture influences the ability to capture latent structure. Architectural and hyperparameter details are reported in Appendix~\ref{sec:appendix-algo-bench}.

\section{Performance Metrics and Experimental Configurations}
\label{sec:experimental-configuration}

We now describe the performance metrics and experimental configurations used to evaluate the methods above. In addition to the IO and LSTM baselines, we include simple problem-specific heuristics (e.g., sorting or greedy rules) to contextualize performance for each application; their definitions are provided in Appendix~\ref{sec:exp-conf}.

Because the unknown components of the objective or constraints are not available to any of the benchmarked methods, true optimality certificates cannot be computed. We therefore evaluate the performance relative to the optimal solutions in the validation set, which are computed using the full problem specification.
Our primary metric is the optimality gap relative to these reference solutions. We also report feasibility percentage and edit distance, depending on the application. 

Across the three applications, we consider four experimental variations:
(i) Models trained using the entire dataset; (ii) Models trained on only 10\% of the training data to assess sensitivity to sample size; (iii) Models trained on data corrupted by 5\% of suboptimal solutions to test robustness against contamination; and (iv) The omission of Constraint reasoning during training, which is applied only during inference. The specific details for each application can be found in Appendix~\ref{sec:exp-conf}.

\section{Experimental Setup and Evaluation}
\label{sec:experiments}

We next present the empirical results for the three application problems introduced in Section~\ref{sec:application-problems}.  The experiments are carried out on a system with an Intel Core i7 processor, 32 GB RAM, and an NVIDIA RTX 4090 GPU with 24 GB VRAM.  We implement all ML models using Python and PyTorch. Additional implementation details can be found in Appendix~\ref{sec:implementation}.

\subsection{Knapsack Problem with an Unknown Reward Function}

We first present results for the knapsack problem under the linear, quadratic, and heuristic settings introduced in Section~\ref{ss:kp}.  Optimal solutions for all knapsack instances are computed using Gurobi with a 12-hour time limit per instance. We consider instances drawn from a universe of 100 elements, generated using varying item sizes and capacities. See Appendix~\ref{sec:knap} for full details.

\smallskip

\noindent\textbf{Linear Knapsack.~} 
Figure~\ref{fig:gaps-linear-knapsack} presents the optimality gaps and Table~\ref{tab:linear-knap-0gaps} reports the percentage of optimally solved instances. As expected, we observe that the IO method performs very well on linear instances. Likewise, transformers perform competitively, generally achieving the highest percentage of correctly solved instances among all methods and sometimes recovering more instances than the \textit{Omniscient greedy} heuristic. LSTM-based models consistently underperform transformers.

Enforcing constraint reasoning during training consistently improves performance, with the effect being more pronounced for transformers than for LSTMs. Under corrupted data, the IO method shows the largest performance decline, performing even worse than the \textit{Random} baseline. For smaller training sets, LSTM-based models are the only method to experience a significant performance drop when learning from fewer observations.
These observations reflect the fact that the linear objective is easy to recover and aligns well with the IO formulation, whereas transformer models learn this structure reliably from data.

\begin{figure}[tb]
  \centering

  \includegraphics[width=\textwidth]{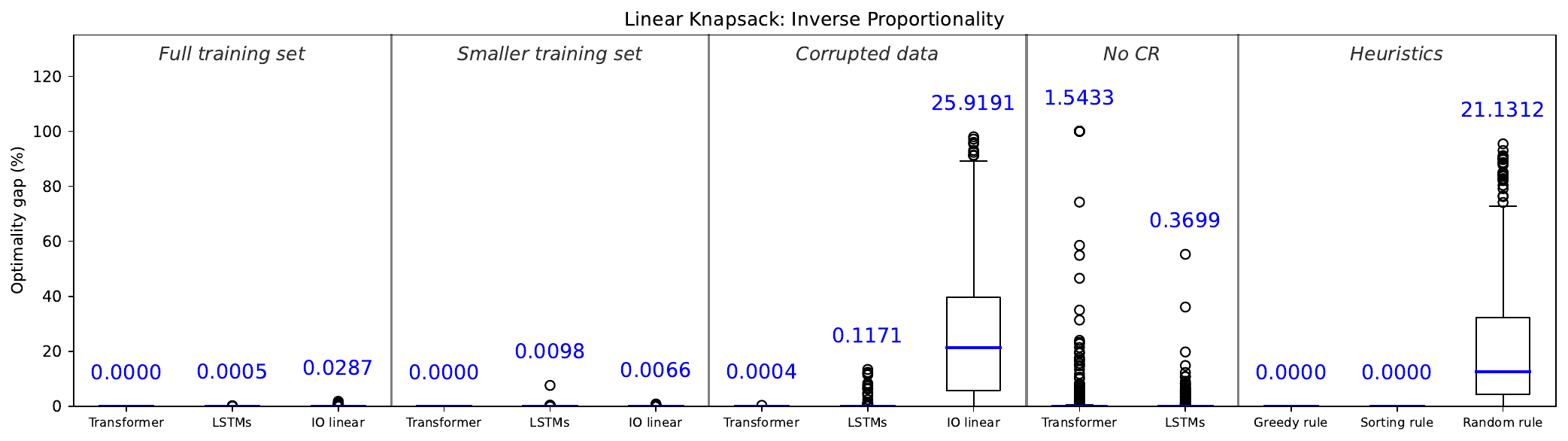}

  \vspace{0.3em}

  \includegraphics[width=\textwidth]{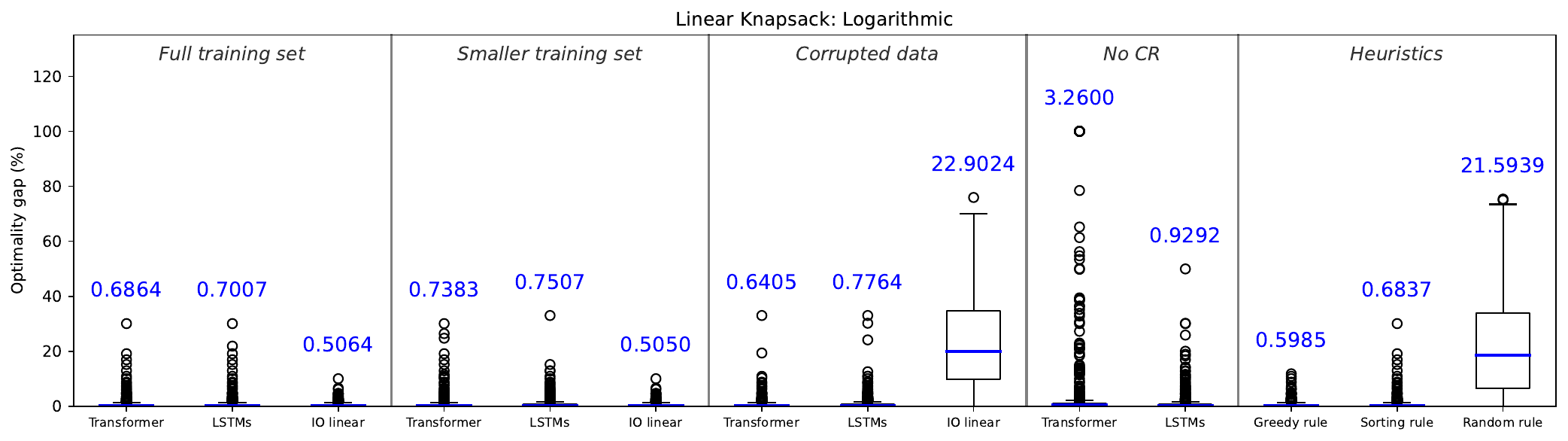}

  \caption{Optimality gaps (\%) for the linear instances of the knapsack problem.}
  \label{fig:gaps-linear-knapsack}

  \vspace{0.25em}
  {\footnotesize\emph{Note.} Means are displayed on top of the boxes.}
\end{figure}

\begin{table}[tb]
\centering
\small
\caption{Percentage (\%) of solutions solved to optimality for the linear instances of the knapsack problem.\label{tab:linear-knap-0gaps}}
\resizebox{\textwidth}{!}{%
\begin{tabular}{llrrrrrr}
\hline
\textbf{Instance type} & \textbf{Experiment} & \textbf{Transformers} & \textbf{LSTMs} & \textbf{\begin{tabular}[c]{@{}l@{}}Linear\\ IO\end{tabular}} & \textbf{\begin{tabular}[c]{@{}l@{}}Omniscient\\ greedy\end{tabular}} & \textbf{\begin{tabular}[c]{@{}l@{}}Sorting\\ rule\end{tabular}} & \textbf{\begin{tabular}[c]{@{}l@{}}Random\\ rule\end{tabular}} \\
\hline
\multirow{4}{*}{\begin{tabular}[c]{@{}l@{}}Linear inversely\\proportional\end{tabular}} 
    & Full training set     & 100.00 & 99.60 & 51.64 & \multirow{4}{*}{100.00} & \multirow{4}{*}{100.00} & \multirow{4}{*}{2.20} \\
    & Smaller training set  & 100.00 & 98.40 & 71.52 &                        &                        &                        \\
    & Corrupted data        & 99.90  & 96.10 & 2.00  &                        &                        &                        \\
    & No CR during training & 72.93  & 90.31 & N/A   &                        &                        &                        \\
\hline
\multirow{4}{*}{\begin{tabular}[c]{@{}l@{}}Linear\\ logarithmic\end{tabular}} 
    & Full training set     & 7.29 & 7.19 & 7.99 & \multirow{4}{*}{6.39} & \multirow{4}{*}{6.89} & \multirow{4}{*}{2.10} \\
    & Smaller training set  & 7.39 & 7.19 & 7.89 &                     &                   &                     \\
    & Corrupted data        & 7.39 & 7.49 & 2.20 &                     &                   &                     \\
    & No CR during training & 4.40 & 6.79 & N/A  &                     &                   &                     \\
\hline
\end{tabular}}
\caption*{\footnotesize CR denotes constraint reasoning and N/A indicates not applicable.}
\end{table}

\medskip

\noindent\textbf{Quadratic Knapsack.~} 
Quadratic instances introduce pairwise interactions that make the problem more challenging than the linear case.  The resulting optimality gaps are shown in Figure~\ref{fig:gaps-quadratic-knapsack}. We observe more pronounced differences in performance between transformers and LSTMs, consistent with the observation in Section \ref{sec:method} that LSTM-based models struggle to capture long complex dependencies.

\begin{figure}[tb]
  \centering
  \includegraphics[width=\textwidth]{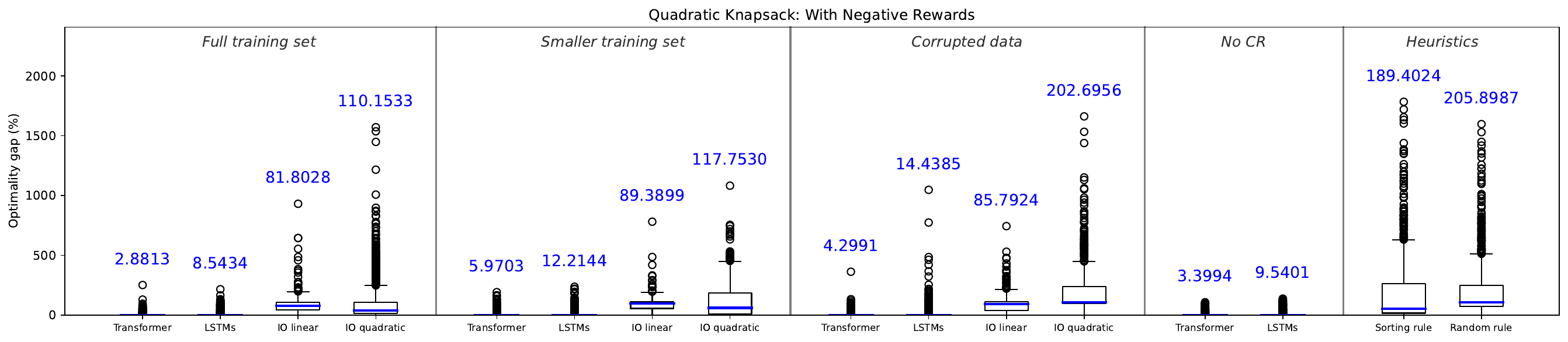}

  \caption{Optimality gaps (\%) for the quadratic instances of the knapsack problem.}
  \label{fig:gaps-quadratic-knapsack}

  \vspace{0.25em}
  {\footnotesize\emph{Note.} Means are displayed on top of the boxes.}
\end{figure}

IO methods perform poorly on quadratic instances. For the linear IO method, this is due to the misspecification of the functional form. The quadratic IO method, while correctly specified, suffers from the complexity of the underlying optimization problem and makes limited progress within the time limit. As a result, the misspecified linear IO method can even outperform the quadratic IO method in this setting.

The {\em Sorting rule} baseline performs well on linear instances but deteriorates sharply on quadratic ones, since it captures only simple ordering patterns and not the pairwise interactions present in these instances. 
This shows that the strong performance of transformers and LSTMs is not an artifact of the sorting rule used in the instance encoding, but reflects the ability of the transformer and LSTM architectures to capture the underlying structure of the problem.

\smallskip

\noindent \textbf{Heuristic Knapsack.~} Table~\ref{tab:results-heuristics} presents the percentage of solutions that satisfy the heuristic rule. 
Transformers deliver the strongest results across all heuristics, reflecting their capacity to represent element interactions and instance-specific latent structures, a consequence of the self-attention mechanism. 
LSTMs rank second with much weaker performance, while the IO approach performs poorly on these instance-dependent heuristics.

\begin{table}[tb]
\centering
\small
\caption{Percentage (\%) of solutions satisfying the heuristic rule.\label{tab:results-heuristics}}
\begin{tabular}{@{}lrrr@{}}
\hline
\textbf{Heuristic/Method} & \textbf{Transformers} & \textbf{LSTMs} & \textbf{Linear IO} \\
\hline
Alternating (choose 1, skip 1)      & 92.93   & 10.05   & 2.10   \\
Alternating (choose 2, skip 1)      & 93.85  & 16.29  & 3.05   \\
Clustering by group                & 91.20  & 13.74  & 3.61  \\
\hline
\end{tabular}
\end{table}

Transformers also offer a clear computational advantage: once trained, they produce solutions in a fraction of a second, with inference time effectively constant across all instance types. LSTMs behave similarly. In contrast, IO must solve a full optimization problem for every observation, so its runtime grows quickly with problem complexity and the assumed functional form; for quadratic knapsack, it often hits the time limit. Full runtime statistics results are reported in Appendix \ref{sec:knap}.

\subsection{Bipartite Matching Problem with an Unknown Objective Function}

Next, we consider the bipartite matching problem.  The optimal solutions are obtained using Gurobi with a 7-hour time limit per instance.  We study instances drawn from a universe of 100 edges from a complete graph; see Appendix~\ref{sec:matching} for more details on instance generation and parameter settings. Given space constraints, we focus here on the most challenging quadratic variant. 
Results for the linear case follow patterns similar to the linear knapsack setting and are reported in Appendix~\ref{sec:matching}.

\medskip

\noindent\textbf{Quadratic Bipartite Matching.~} Figure~\ref{fig:gaps-matching-quadratic} presents the optimality gaps. The results closely mirror those for the quadratic knapsack problem. As in that setting, the quadratic structure introduces pairwise interactions that make IO iterations computationally expensive, yielding little progress within the time limit. This also explains the counterintuitive effect that the quadratic IO method can perform best when trained with less data.
Overall, transformers achieve the smallest optimality gaps across all experiments, followed by LSTMs. While transformer performance is more sensitive to reduced training data than in the knapsack setting, the transformer model still clearly outperform LSTMs and all IO variants.

\begin{figure}[tb]
  \centering
  \includegraphics[width=\textwidth]{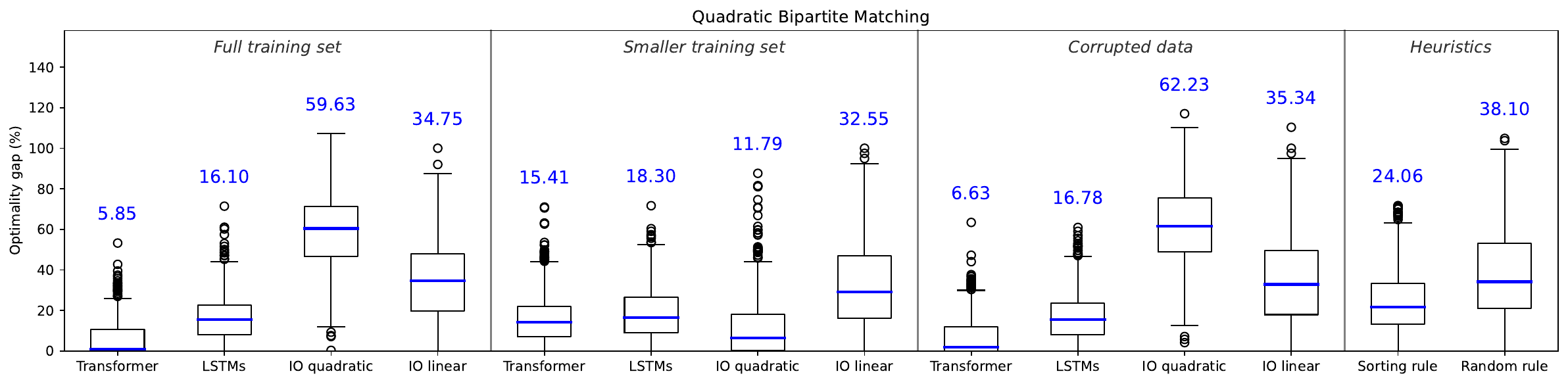}

  \caption{Optimality gaps (\%) for the quadratic bipartite matching problem.}
  \label{fig:gaps-matching-quadratic}

  \vspace{0.25em}
  {\footnotesize\emph{Note.} Means are displayed on top of the boxes.}
\end{figure}

\subsection{Single-Machine Scheduling Problem with Release Times and Unknown Precedence Constraints}

We next report results for the single-machine scheduling problem.
For this problem, optimal solutions are obtained using CODD \citep{michel2024codd}, a decision diagram-based solver for combinatorial optimization.
We study instances of lengths 10 and 20, setting time limits of 3.5 and 12 hours, respectively, for training the ML models in each experiment. Additionally, we allocate a maximum of 2 hours per instance for solution generation with the CODD solver.

Each instance contains a subset of 10 or 20 jobs drawn from a universe of~100. We consider three different topologies of group-based precedence constraints encoded as directed acyclic graphs. Parameter definitions and data generation procedures details are provided in Appendix~\ref{sec:scheduling}.

For the IO approach, we consider three objective functions for the forward problem (see Appendix~\ref{sec:appendix-algo-bench-io} for more details): (i) \textit{IO linear}: a linear objective in job completion times, corresponding to minimizing the average completion time; (ii) \textit{IO jobs}: a linear objective augmented with pairwise job–job interaction terms; and (iii) \textit{IO groups}: a linear objective augmented with group-level interaction terms. The latter two formulations are designed to encode precedence structure. While both are, in principle, expressive enough to represent the underlying implicit precedence constraints when correctly specified, the job-level pairwise interaction model requires estimating a substantially larger number of parameters than the group-level model. An advantage of these two IO formulations is that they explicitly bias the model toward precedence-based structure, whereas the ML-based approaches are not provided with such prior information and instead rely on learning precedence relations from data.

\medskip 
\noindent \textbf{Results.~} For brevity, we report results for a representative set of precedence constraints: Topology A. 
Full results for Topologies B and C are provided in Appendix~\ref{sec:scheduling}. Table~\ref{tab:precedence-results-topA20} summarizes precedence satisfaction percentages across all methods and experimental settings. ML-based methods substantially outperform the IO methods, even though they are trained without any information about the precedence constraints. The transformer model generally achieves the highest satisfaction rates among all approaches. 
In contrast, IO methods infer precedence structure indirectly through the assumed objective variants, which is insufficient: they satisfy the unknown constraints for only a small percentage of instances.
Figure~\ref{fig:gaps-boxplot-20-A} presents the optimality gaps for this topology. We observe that even when assuming the correct functional form, IO exhibits a high mean optimality gap on the few instances that satisfy the precedence constraints compared to other methods. In contrast, the transformer model achieves consistently low optimality gaps while simultaneously maintaining high precedence satisfaction. LSTMs also substantially outperform IO, but remain consistently worse than transformers in both feasibility and solution quality.

\begin{table}[tb]
\centering
\small
\caption{Percentage (\%) of solutions that completely satisfy precedence constraints for the single-machine scheduling problem.\label{tab:precedence-results-topA20}}
\resizebox{\textwidth}{!}{%
\begin{tabular}{@{}lclrrrrrrr@{}}
\hline
\textbf{\begin{tabular}[c]{@{}l@{}}Precedence\\constraints\end{tabular}} & 
\textbf{\begin{tabular}[c]{@{}c@{}}Instance\\length\end{tabular}} & 
\textbf{Experiment} & 
\textbf{Transformers} & 
\textbf{LSTMs} & 
\textbf{\begin{tabular}[c]{@{}c@{}}IO:\\Linear\end{tabular}} & 
\textbf{\begin{tabular}[c]{@{}c@{}}IO:\\Jobs\end{tabular}} & 
\textbf{\begin{tabular}[c]{@{}c@{}}IO:\\Groups\end{tabular}} & 
\textbf{Random} \\
\hline
\multirow{2}{*}{Topology A}  
    & 20 & Full training set       & 100.00 & 99.20 & 0.00 & \begin{tabular}[c]{@{}r@{}}0.00\\(5.00\%)\end{tabular}  & 6.90  & \multirow{2}{*}{0.00} \\
    & 20 & Smaller training set    & 99.30  & 90.60 & 0.00 & 0.00 & 1.80  &                         \\
\hline
\end{tabular}
}
\caption*{\footnotesize The percentage of solved instances is shown in parentheses when it is less than 100\%. For those instances, the percentage reported is among the solved solutions.}
\end{table}

\begin{figure}[t]
  \centering
  \includegraphics[width=0.9\textwidth]{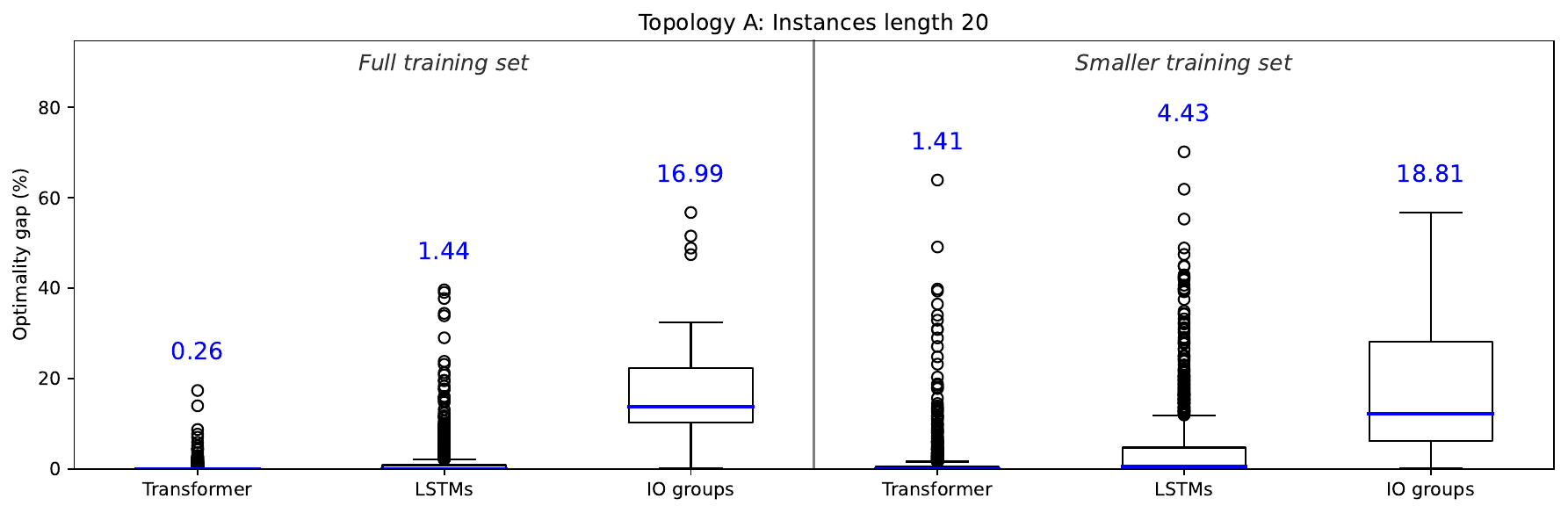}

  \caption{Optimality gaps (\%) for instances of length 20 for the single-machine scheduling problem.}
  \label{fig:gaps-boxplot-20-A}
  \vspace{0.25em}
  {\footnotesize\emph{Note.} These results only consider solutions that satisfy precedence constraints.}
\end{figure}

\subsection{Discussion}

The experiments across the three application problems highlight consistent findings in how transformer models compare with IO and LSTMs. 
These problems instantiate the latent-structure types introduced in Section~\ref{sec:problem}---ranging from observable linear structure, to hidden pairwise interactions, to hidden feasibility constraints---allowing us to assess performance across increasingly complex unknown components.
Here, we synthesize these results, discuss common patterns, and draw broader implications regarding scalability, robustness, and the role of latent problem structures.

We observe that IO performs well when the underlying objective function is simple, e.g. linear, and the model is correctly specified and trained with clean data. However, its effectiveness deteriorates quickly, both in terms of solution quality and generation speed, if the assumed model form is misspecified or the data is noisy, demonstrating that it is not robust. As discussed in Section~\ref{sec:algorithmic-benchmarks}, this is because the cutting-plane algorithm that it uses assumes uncorrupted data and, at each iteration, must solve one forward problem per observation. When the forward problems are complex, this process becomes slow and may fail to make sufficient progress before reaching the time limit.

LSTM-based models, in turn, are generally more robust than IO, since they do not rely on strong assumptions about the functional form of the objective and are less sensitive to data imperfections. However, they are consistently outperformed by transformers, particularly in settings where the objective function or implicit constraints depend on relationships between elements. In these cases, the attention mechanism described in Section~\ref{sec:method} enables transformers to capture structural dependencies that LSTMs cannot. That said, when the underlying problem structure is simple, LSTMs can provide a competitive and more computationally efficient alternative, since their training and inference costs are substantially lower than those of transformers. The key challenge, however, is that in practice the complexity of the latent structure is rarely known in advance. This makes transformers the most reliable choice: across all applications and experimental configurations, they consistently delivered strong performance, establishing them as the safest option when tackling optimization problems with unknown components.

The performance of ML-based methods also varies depending on the type of latent structure. The cases where the latent structure depends only on observable parameters are the easiest to learn because the representation bias from feature encoding strongly aids learning. It is more challenging when the latent structures depend on unobservable parameters. Since the mapping from features to objectives or constraints is hidden, models must rely more on architectural bias (e.g., attention) and on the generalization patterns learned from past decisions. Finally, the instance-specific type is the most challenging. In this case, inductive bias must be flexible enough to adapt across instances rather than discover a stable rule. Naturally, performance is shaped not only by the latent structure but also by the inherent complexity of the underlying optimization problem.

\section{Conclusions}
\label{sec:conclusions}

We introduced a structured prediction framework for discrete optimization problems with unknown components, combining transformer models with DFA-based constraint reasoning to generate feasible solutions directly from instance data. By learning the mapping from inputs to optimal decisions, our approach circumvents the modeling assumptions and computational difficulties inherent to IO, particularly under misspecification or hidden feasibility constraints. Across three classical applications---the knapsack problem, bipartite matching, and single-machine scheduling---transformers consistently delivered high-quality solutions and substantial inference-time gains relative to IO and LSTM-based baselines, demonstrating the effectiveness of attention-based architectures in capturing latent problem structure. While our approach relies on monotone constraint systems and sufficient historical solution data, these assumptions arise naturally in many practical settings. Overall, the results highlight structured prediction as a viable alternative to IO when objectives or constraints are unknown or difficult to specify.

\bibliography{sample}

\newpage
\section*{Appendix}
    \renewcommand{\thesubsection}{\Alph{subsection}.}
    \renewcommand{\thefigure}{\thesubsection\arabic{figure}}
    \renewcommand{\thesubsubsection}{\thesubsection\arabic{subsubsection}}
    \renewcommand{\thetable}{\thesubsection\arabic{table}}
    \setcounter{figure}{0}
    \setcounter{table}{0}

The following Electronic Companion provides additional methodological details, complete experimental configurations, and extended results that complement the main text.

\subsection{Instance Generation}
\label{sec:appendix-instances}

\subsubsection{Knapsack Problem}
\label{sec:appendix-instances-knap}

We consider a universe of $n$ elements. We randomly sample each $w_{j}$ from $[1,K]$. Each instance is defined by a subset $U$ of elements and a maximum capacity $B$. We first draw the subset $U$ and then define the maximum capacity $B = \frac{p}{P + 1}\sum_{j \in S}
w_{j}$, with $P = 100$ and $p \in \{1,...,100\}$. We take a sample of five values $p$ for each element subset, obtaining five different instances that differ in their maximum capacities.

\subsubsection{Bipartite Matching Problem}
\label{sec:appendix-instances-match}
We construct a universe of $n_L$ nodes on the left-hand side and $n_R$ nodes on the right-hand side. From these, we generate a complete bipartite graph with node set $N = V_{1} \cup V_{2}$, where $|V_{1}| = n_{L}$ and $|V_{2}| = n_{R}$, and edge set $E = \{(j,k): j \in V_{1}, k \in V_{2}\}$. For each instance, we sample a subset of edges $S_{E} \subseteq E$, with their corresponding labels based on group combinations.

In this application, inspired by practical allocation problems such as team formation and diversity-aware task assignment, we define rewards based on group-based interactions (see Table~\ref{tab:group_rewards-matching}). For linear rewards, each edge receives a value that depends on the absolute difference between the group labels of its endpoints, which reflects the affinity between, for example, a worker and a task. A reward is sampled uniformly from a group-specific range and then applied consistently across all instances.

For the quadratic case, we define pairwise interactions among left-side nodes (e.g., workers) assigned to the same right-side node (e.g., tasks). In particular, we penalize assignments in which all nodes belong to the same group and reward those where exactly one node differs in group membership. This design encourages within-task diversity while discouraging homogeneous (monocultural) assignments.

\subsubsection{Single-machine Scheduling Problem}
\label{sec:appendix-instances-scheduling}

We construct a universe of $n$ jobs. For each job $j \in U$ in group $g_{j} \in G$, its processing time $p_{j}$ is sampled from a truncated normal distribution, denoted as $p_{j} \sim TN(\mu_{g_{j}}, \sigma_{g_{j}}; [1,K])$, where $(\mu_{g_{j}}, \sigma_{g_{j}})$ are group-specific parameters and $K$ is a global upper bound shared by all groups. For each instance, a subset $S \subseteq U$ of jobs is selected, and for each job $j \in S$, a release time $r_{j} \in \mathbb{Z}^{+}$ is sampled following the OAS benchmark procedure \citep{de2021single}, which depends on a tardiness factor $\tau$. Specifically, release times are drawn uniformly at random from the interval $[0, \tau \cdot \sum_{j \in S}p_{j}]$. Without loss of generality, both processing and release times are rounded to the nearest integer. Additionally, we introduce group-based precedence constraints in the form of directed acyclic graphs, which encode underlying structural preferences as constraints. These constraints are fixed across all instances and are determined by the group assignments.

\subsection{Algorithmic Benchmarks}
\label{sec:appendix-algo-bench}

\subsubsection{Inverse Optimization Approach}
\label{sec:appendix-algo-bench-io}



Here, we specify the problem-specific
forward models, parametric objective families, initialization schemes, and objective variants for the three applications.

\smallskip

\noindent \textbf{Knapsack Problem.~} The unknown parameters correspond to $c = (c^{1}, c^{2})$. We assume two different forward problems depending on the experiment, given by $\textbf{KP-L}(c^{1}, u)$ or $\textbf{KP-Q}(c, u)$. For the initial guess $c^{0}$, we use the same empirical frequencies employed to sort the input and target data. In the quadratic case, these frequencies are computed for each pair of elements rather than for individual elements.

\medskip

\noindent \textbf{Bipartite Matching Problem.~} The unknown parameters correspond to $c = (c^{1}, c^{2})$. We assume two different forward problems, given by $\textbf{BM-L}(c^{1}, u)$ for a linear objective or $\textbf{BM-Q}(c, u)$ for a quadratic one. As in the knapsack setting, we construct the initial guesses $c^{0}$ from empirical frequencies of appearance in the optimal solutions, at the edge level for the linear case and at the pairwise level for the quadratic case.
\medskip

\noindent \textbf{Scheduling Problem.~} In this application, the unknown components are the precedence constraints. In the forward problem, we keep the constraints of \textbf{SMS}$(\mathcal{Q}, u)$. For the objective, we assume that it corresponds to minimizing the average completion time, but we also explore alternative functional forms to capture the presence of implicit precedence constraints through $f_{\text{inv}}$. This gives the IO approach an advantage compared to ML-based methods, which have to learn that there are precedence constraints.

We consider three objective functions for $f_{\text{inv}}$, presented in Table~\ref{tab:io-scheduling}, where $c = (c^{1}, c^{2}, c^{3})$ are the unknown parameters: (i) a linear objective in job completion times; (ii) a linear objective augmented with pairwise job–job interaction terms; and (iii) a linear objective augmented with group-level interaction terms. The interaction terms in the second and third objective functions encode precedence structure. When the precedence coefficients $c^{2}$ and $c^{3}$ are correctly specified and sufficiently large relative to the linear weights $c^{1}$, the second and third objective recover the true underlying objective that captures precedence constraints.

To model the third objective function, additional variables and constraints are required. We introduce a binary matrix variable $m_{gh} \in \{0.1\}^{|G|\cdot |G|}$ where $m_{gh} = 1$ indicates that all jobs in group $g \in G$ are scheduled before all jobs in group $h \in G$. We add the following constraints to \textbf{SMS}$(\mathcal{Q}, u)$. Let $J_{g}$ be the set of jobs in group $g$:
\begin{equation}
\label{eq:new-constraints}
\begin{aligned}
m_{gh} &\le z_{jk} \quad \forall\, g,h \in G,\; g \neq h,\; j \in J_g,\; k \in J_h&&  \\
1- m_{gh} &\le \sum_{j \in J_g}\sum_{k \in J_h} \bigl(1-z_{jk}\bigr) \quad \forall\, g,h \in G,\; g \neq h&&
\end{aligned}
\end{equation}

\begin{table}[tbp]
\centering
\small
\caption{Summary of forward problem objectives and their corresponding IO decision variables.\label{tab:io-scheduling}}
\begin{tabular}{@{}lcl@{}}
\toprule
\multicolumn{1}{c}{\textbf{Reward type}} & \textbf{Name} &
\multicolumn{1}{c}{\textbf{Function and variables}} \\
\midrule
\multirow{2}{*}{Linear objective ($c^{1}$)} & \multirow{2}{*}{IO linear} &
$\displaystyle \sum_{j \in U}c^{1}_{j}\,w_{j}$ \\
\cmidrule(l){3-3}
& & $c^{1} \in \mathbb{R}^{|U|}$ \\
\midrule
\multirow{2}{*}{Jobs pairwise rewards ($c^{1}$, $c^{2}$)} & \multirow{2}{*}{IO jobs} &
$\displaystyle \sum_{j \in U}c^{1}_{j}\,w_{j}
\;+\; \sum_{\substack{j,k \in U\\ j \neq k}}c^{2}_{jk}\,z_{jk}$ \\
\cmidrule(l){3-3}
& & $c^{1} \in \mathbb{R}^{|U|}_{\ge 0},\;\; c^{2} \in \mathbb{R}^{|U|\cdot |U|}$ \\
\midrule
\multirow{2}{*}{Group pairwise rewards ($c^{1}$, $c^{3}$)} & \multirow{2}{*}{IO groups} &
$\displaystyle \sum_{j \in U}c^{1}_{j}\,w_{j}
\;+\; \sum_{\substack{g,h \in G\\ g \neq h}}c^{3}_{gh}\,m_{gh}$ \\
\cmidrule(l){3-3}
& & $c^{1} \in \mathbb{R}^{|U|}_{\ge 0},\;\; c^{3} \in \mathbb{R}^{|G|\cdot |G|}$ \\
\bottomrule
\end{tabular}
\caption*{\footnotesize Each row corresponds to a different hypothesis about the unknown objective function used by the decision maker.}
\end{table}

In this case, the algorithm requires an initial guess $c^{0} = (c^{1,0}, c^{2,0}, c^{3,0})$ for the linear and quadratic cost parameters $c^{1}$, $c^{2}$, and $c^{3}$. To initialize the linear costs $c^{1,0}$, we compute each job’s average completion time across all training instances and set its initial cost as the inverse of this value, giving higher weight to jobs that consistently finish earlier. The quadratic job costs $c^{2,0}$ are estimated from the empirical frequency with which one job precedes another when both appear in the same instance, capturing pairwise precedence preferences. Similarly, the group-level quadratic costs $c^{3,0}$ are initialized using the frequency with which one group precedes another across training instances.

\subsubsection{Additional Baseline Approaches}

In addition to IO and LSTM-based methods, we apply different baseline approaches for comparison, for each of the applications.

\medskip

\noindent \textbf{Knapsack Problem.~} We compare against simple constructive heuristics that select items based on randomized or rule-based orderings. In the \textit{Random} baseline, the set of items is randomly permuted to form a priority list, and items are added to the knapsack in that order until the capacity constraint is reached. This baseline is independent of the objective function. The \textit{Omniscient greedy} heuristic orders items in decreasing reward-to-weight ratio and adds them sequentially until the capacity is reached; this applies only when the value function is linear. Finally, the \textit{Sorting rule} heuristic orders items according to the predefined sorting rule in Section~\ref{sec:application-problems}, and adds them in that order, provided the capacity constraint is not violated.

\medskip

\noindent \textbf{Bipartite Matching Problem.~} We implement heuristics that prioritize edges using randomized or deterministic sorting rules. In the \textit{Random} baseline, the set of edges is randomly permuted to create a priority list, and the edges are greedily selected subject to feasibility constraints. The \textit{Sorting rule} heuristic orders edges according to the sorting rule defined in Section~\ref{sec:application-problems} and adds them to the matching in that order, provided the feasibility is maintained.

\medskip

\noindent \textbf{Scheduling Problem.~} We use a simple randomized baseline to generate job sequences. The \textit{Random} baseline randomly permutes the set of jobs to generate a schedule.

\subsection{Performance Metrics and Experimental Configurations}
\label{sec:exp-conf}

\noindent \textbf{Knapsack Problem with an Unknown Reward Function.~} We conduct two sets of experiments. The first explores the linear and quadratic knapsack problems, while the second explores the heuristic knapsack problem. For the first set of experiments, 
we study the deterministic value functions presented in Table~\ref{tab:functional-forms}. For each of these functions, we conduct all four experimental variations. To corrupt the data, we use the \textit{Random} baseline heuristic. 

In addition, we evaluate the IO method on quadratic problems under the incorrect assumption of a linear functional form, providing insights into its performance when it inaccurately models the underlying functional form of the unknown component.

The second set of experiments studies the three heuristic rules introduced in Section~\ref{sec:application-problems}: \textit{Alternating (choose 1, skip 1)}, \textit{Alternating (choose 2, skip 1)}, and \textit{Clustering by group}.
We only conduct the first experimental evaluation for these experiments.

For the first set of experiments on linear and quadratic knapsack problems, we consider two metrics: optimality gap and percentage of instances solved to optimality. For the experiments on the heuristic knapsack, the numerical notion of optimality gap is no longer suitable, since our solutions are derived from rule-based heuristics.  In this case, we measure the performance by the percentage of solutions in the validation set that satisfy the corresponding heuristic rule. 

\medskip

\noindent \textbf{Bipartite Matching Problem with an Unknown Objective Function.~}
We adopt the same experimental design as in the knapsack setting. For each value function, we apply the first three experimental variations introduced above: training on the full dataset, training on 10\% of the data, and training on data corrupted by 5\% of suboptimal solutions generated by the \textit{Random} baseline.

As in the knapsack experiments, we also evaluate the IO method under model misspecification by assuming a linear functional form for instances generated following the quadratic bipartite matching problem.

We evaluate performance using the same metrics as in the first set of knapsack experiments: the optimality gap and the percentage of instances solved to optimality.

\medskip

\noindent \textbf{Single-machine Scheduling Problem with Release Times and Unknown Precedence Constraints.~} We conduct the first two experimental variations defined above: training on the full dataset and training on only 10\% of the data. In addition, and consistent with the other applications, we evaluate the IO algorithm under model misspecification by applying the first objective presented in Table~\ref{tab:io-scheduling}, since both the second and third one can capture the underlying precedence constraints.

To evaluate whether the methods are successfully capturing the latent precedence constraints, we report the percentage of solutions that fully satisfy the precedence constraints. Among these feasible solutions, we further assess quality by comparing their average completion times with those of the optimal schedules and reporting the corresponding optimality gap. Additionally, we compute the edit distances with respect to optimal solutions.

\subsection{Implementation Details}
\label{sec:implementation}

Across all applications, we incorporate a GELU activation at the input embedding stage to enhance the expressiveness of the learned representations. We also apply dropout with a rate of 0.2 to mitigate overfitting and improve generalization.

Regarding the learning dynamics, we use the SOAP optimizer with a batch size of 1024, reduced to 512 when memory constraints arose. We impose a fixed time limit for each experiment, and stopped training ML models either upon signs of overfitting or once the time limit was reached. We use 1,000 observations for evaluation.

\subsection{Parameter Specification and Extended Results}

\subsubsection{Knapsack Problem}
\label{sec:knap}

\noindent\textbf{Parameter Definition.~}
We consider a universe of $100$ elements and subsets of varying sizes,  $|S| = 10, 20,\dots, 90$. The datasets contain approximately 90,000 observations. We set $K = 10,000$. We define the hyperparameter values of the proposed and competing methods. Table \ref{tab:parameters-knapsack} outlines the configurations used in the experiments. For the ML-based models, we select the hyperparameter settings that achieve the best performance in terms of loss values during validation. For the IO cutting-plane algorithm, we adopt the parameter values of~\cite{bodur2022inverse} and use $\lambda = 5$. 
We define the constants $q_{1}$ to $q_{3}$ based on the ranges of the reward value functions. In particular, we find that $q_{1} = 0.0015$, $q_{2} = 0.0003$, and $q_{3} = 0.0009$ yield the most interesting results.

\begin{table}[tb]
\centering
\small
\caption{Parameter configuration for the machine learning models used in the knapsack problem.\label{tab:parameters-knapsack}}
\begin{tabular}{lllrr}
\hline
\textbf{Method} & \textbf{Instance type} & \textbf{Parameter} & \textbf{Value} & \textbf{Number of Parameters} \\
\hline
\multirow{5}{*}{Transformers} 
    & \multirow{5}{*}{\begin{tabular}[c]{@{}l@{}}Linear inverse\\ and proportionality\end{tabular}} 
    & Dimension       & 96   & \multirow{5}{*}{3,642,805} \\
    &                                & Number of heads & 4    &                           \\
    &                                & Encoder layers  & 4    &                           \\
    &                                & Decoder layers  & 4    &                           \\
    &                                & Learning rate   & 1e-4 &                           \\
\hline
\multirow{5}{*}{Transformers} 
    & \multirow{5}{*}{\begin{tabular}[c]{@{}l@{}}Linear logarithmic\\ and quadratic\end{tabular}} 
    & Dimension       & 128  & \multirow{5}{*}{5,024,124} \\
    &                                & Number of heads & 4    &                           \\
    &                                & Encoder layers  & 4    &                           \\
    &                                & Decoder layers  & 4    &                           \\
    &                                & Learning rate   & 1e-4 &                           \\
\hline
\multirow{3}{*}{LSTMs} 
    & \multirow{3}{*}{Linear and quadratic} 
    & Embedding dimension & 128 & \multirow{3}{*}{315,077} \\
    &                                & Hidden dimension    & 128 &                          \\
    &                                & Learning rate       & 5e-4&                          \\
\hline
\end{tabular}
\end{table}

\medskip

\noindent\textbf{Computational Speed.~} To better understand the trade-off between accuracy and efficiency, we report both training and solution generation times of each method on the different instances. Table~\ref{tab:training-speed-linear-knap}, Table~\ref{tab:training-times-quad-knap} and Table~\ref{tab:training-times-heuristic-knap} report the training and solution generation times for linear, quadratic and heuristic instances, respectively.

\begin{table}
\centering
\small
\caption{Training (hours) and mean inference (seconds) times of competing methods on linear knapsack instances.\label{tab:training-speed-linear-knap}}
\resizebox{\textwidth}{!}{%
\begin{tabular}{@{}llrrrrrr@{}}
\hline
\multirow{2}{*}{\textbf{Instance}} & \multirow{2}{*}{\textbf{Experiment}}
& \multicolumn{2}{c}{\textbf{Transformers}}
& \multicolumn{2}{c}{\textbf{LSTMs}}
& \multicolumn{2}{c}{\textbf{Linear IO}} \\
& & \textbf{Training} & \textbf{Inference} & \textbf{Training} & \textbf{Inference} & \textbf{Training} & \textbf{Inference} \\
\hline
\multirow{4}{*}{\begin{tabular}[c]{@{}l@{}}Linear inverse\\ proportionality\end{tabular}}
  & Full training set       & 7.56 & 0.040 & 3.20 & 0.005 & 3.10 & 0.008 \\
  & Smaller training set    & 5.42 & 0.040 & 1.42 & 0.005 & 0.38 & 0.008 \\
  & Corrupted data          & 11.33 & 0.040 & 1.60 & 0.005 & 12.00 & 0.010 \\
  & No CR during training   & 9.44 & 0.040 & 1.40 & 0.005 & N/A & N/A \\
\hline
\multirow{4}{*}{\begin{tabular}[c]{@{}l@{}}Linear\\ algorithmic\end{tabular}}
  & Full training set       & 9.72 & 0.040 & 5.00 & 0.005 & 4.15 & 0.009 \\
  & Smaller training set    & 9.11 & 0.040 & 1.20 & 0.005 & 0.68 & 0.009 \\
  & Corrupted data          & 9.72 & 0.040 & 2.80 & 0.005 & 12.00 & 0.012 \\
  & No CR during training   & 9.72 & 0.040 & 1.80 & 0.005 & N/A & N/A \\
\hline
\end{tabular}}
\caption*{\footnotesize CR denotes constraint reasoning and N/A indicates not applicable.}
\end{table}

\begin{table}[tb]
\centering
\small
\caption{Training (hours) and mean inference (seconds) times of competing methods on quadratic knapsack instances.\label{tab:training-times-quad-knap}}
\resizebox{\textwidth}{!}{%
\begin{tabular}{@{}lrrrrrrrr@{}}
\hline
\multirow{2}{*}{\textbf{Experiment}}
& \multicolumn{2}{c}{\textbf{Transformers}}
& \multicolumn{2}{c}{\textbf{LSTMs}}
& \multicolumn{2}{c}{\textbf{Linear IO}}
& \multicolumn{2}{c}{\textbf{Quadratic IO}} \\
& \textbf{Training} & \textbf{Inference} & \textbf{Training} & \textbf{Inference} & \textbf{Training} & \textbf{Inference} & \textbf{Training} & \textbf{Inference} \\
\hline
Full training set        & 11.66 & 0.039 & 4.83 & 0.005 & 12.00 & 0.009 & 12.00 & 4.210 \\
Smaller training set     & 6.66  & 0.039 & 2.67 & 0.005 & 12.00 & 0.009 & 12.00 & 6.935 \\
Corrupted data           & 11.55 & 0.039 & 1.67 & 0.005 & 12.00 & 0.009 & 12.00 & 8.806 \\
No CR during training    & 12.00 & 0.039 & 4.83 & 0.005 & N/A & N/A & N/A & N/A \\
\hline
\end{tabular}
}
\caption*{\footnotesize CR denotes constraint reasoning and N/A indicates not applicable.}
\end{table}

\begin{table}
\small
\centering
\caption{Training (hours) and mean inference (seconds) times of competing methods on the heuristic knapsack instances.\label{tab:training-times-heuristic-knap}}
\begin{tabular}{@{}lrrrrrr@{}}
\hline
\multirow{2}{*}{\textbf{Heuristic/Method}} 
& \multicolumn{2}{c}{\textbf{Transformers}} 
& \multicolumn{2}{c}{\textbf{LSTMs}} 
& \multicolumn{2}{c}{\textbf{Linear IO}} \\
& \textbf{Training} & \textbf{Inference} & \textbf{Training} & \textbf{Inference} & \textbf{Training} & \textbf{Inference} \\
\hline
Alternating (choose 1, skip 1) & 4.03 & 0.020 & 1.26 & 0.002 & 12.00 & 0.004 \\
Alternating (choose 2, skip 1) & 7.70 & 0.020 & 2.10 & 0.002 & 12.00 & 0.004 \\
Clustering by group            & 2.46 & 0.020 & 0.05 & 0.002 & 12.00 & 0.004 \\
\hline
\end{tabular}
\end{table}

Solution generation speed for machine learning-based methods is uniform regardless of the type of instance. Using the IO method, the speed strongly depends on the underlying optimization problem and the assumed functional form. In fact, the IO method times out on the smaller training set and under the corrupted data configurations for the quadratic instances, respectively.

\smallskip

\noindent \textbf{Quadratic Knapsack.~} Table~\ref{tab:quadratic-knap-0gaps} reports the percentage of optimally solved instances for the set of quadratic instances.

\begin{table}[htb]
\centering
\small
\caption{Percentage (\%) of solutions solved to optimality for the quadratic instances of the knapsack problem.\label{tab:quadratic-knap-0gaps}}
\resizebox{\textwidth}{!}{%
\begin{tabular}{lrrrrrr}
\hline
\textbf{Experiment} & \textbf{Transformers} & \textbf{LSTMs} & \textbf{Linear IO} & \textbf{Quadratic IO} & \textbf{Sorting rule} & \textbf{Random rule} \\
\hline
Full training set     & 86.21 & 70.83 & 7.49 & 4.10                  & \multirow{4}{*}{0.70} & \multirow{4}{*}{0.40} \\
Smaller training set  & 79.12 & 66.93 & 6.29 & 3.60 (72.20\%)        &                       &                      \\
Corrupted data        & 85.41 & 69.83 & 6.89 & 0.00 (59.80\%)        &                       &                      \\
No CR during training & 83.32 & 68.33 & N/A  & N/A                   &                       &                      \\
\hline
\end{tabular}
}
\caption*{\footnotesize The percentage of solved instances shown in parentheses when it is less than 100\%. For those instances, the percentage
reported is among the solved solutions.}
\end{table}

\subsubsection{Bipartite Matching Problem}
\label{sec:matching}

\noindent\textbf{Parameter Definition.~} We consider a complete bipartite graph with $n_L = 10$ left nodes and $n_R = 10$ right nodes. To construct instances with varying sparsity, we generate edge subsets by sampling edges with different selection probabilities: 0.2, 0.4, and 0.6. We have 3 groups to which nodes belong. The dataset contains 200,000 observations. We define the hyperparameter values of the proposed and competing method, Table~\ref{tab:ml-config-matching} outlines the configurations used in the experiments. For the ML-based methods, we select the configurations that yielded the best loss values during validation. For the IO cutting-plane algorithm we use the same values as for the knapsack problem application because we observe they worked well. We define the constants for the linear and quadratic rewards as $a_{0} = 8$, $b_{0} = 10$, $a_{1} = 4$, $b_{1} = 6$, $a_{2} = 1$, $b_{2} = 2$, $\gamma = 6$.

\begin{table}[htb]
\centering
\small
\caption{Parameter configuration for the machine learning models used in the bipartite matching problem.\label{tab:ml-config-matching}}
\begin{tabular}{llrr}
\hline
\textbf{Method} & \textbf{Parameter} & \textbf{Value} & \textbf{Number of Parameters} \\
\hline
\multirow{5}{*}{Transformers} 
    & Dimension         & 128   & \multirow{5}{*}{6,234,181} \\
    & Number of heads   & 8     &                           \\
    & Encoder layers    & 6     &                           \\
    & Decoder layers    & 4     &                           \\
    & Learning rate     & 1e-4  &                           \\
\hline
\multirow{3}{*}{LSTMs} 
    & Embedding dimension & 128   & \multirow{3}{*}{315,077} \\
    & Hidden dimension    & 128   &                          \\
    & Learning rate       & 5e-4  &                          \\
\hline
\end{tabular}
\end{table}

\medskip

\noindent\textbf{Computational Speed.~} Table~\ref{tab:times-training-bipartite} reports the training times for linear and quadratic instances of the bipartite matching problem, and the solution generation times. As in the knapsack problem, the solution generation speed for machine learning-based methods is uniform regardless of the type of instance. In contrast, IO inference time is dictated by the underlying forward problem. For quadratic instances, mean solution generation time is more than twice that of linear instances.

\begin{table}
\centering
\small
\caption{Training (hours) and mean inference (seconds) times of competing methods on the bipartite matching problem instances.\label{tab:times-training-bipartite}}
\resizebox{\textwidth}{!}{%
\begin{tabular}{@{}llrrrrrrrr@{}}
\hline
\multirow{2}{*}{\textbf{Instance}} & \multirow{2}{*}{\textbf{Experiment}}
& \multicolumn{2}{c}{\textbf{Transformers}}
& \multicolumn{2}{c}{\textbf{LSTMs}}
& \multicolumn{2}{c}{\textbf{Linear IO}}
& \multicolumn{2}{c}{\textbf{Quadratic IO}} \\
& & \textbf{Training} & \textbf{Inference} & \textbf{Training} & \textbf{Inference} & \textbf{Training} & \textbf{Inference} & \textbf{Training} & \textbf{Inference} \\
\hline
\multirow{3}{*}{Linear}
  & Full training set       & 4.38 & 0.009 & 6.88 & 0.001 & 2.45 & 0.002 & N/A   & N/A   \\
  & Smaller training set    & 0.72 & 0.009 & 3.61 & 0.001 & 0.28 & 0.002 & N/A   & N/A   \\
  & Corrupted data          & 5.93 & 0.009 & 6.95 & 0.001 & 7.00 & 0.003 & N/A   & N/A   \\
\hline
\multirow{3}{*}{Quadratic}
  & Full training set       & 6.67 & 0.009 & 7.00 & 0.001 & 7.00 & 0.002 & 7.00 & 0.008 \\
  & Smaller training set    & 0.66 & 0.009 & 3.61 & 0.001 & 7.00 & 0.002 & 1.33 & 0.005 \\
  & Corrupted data          & 4.81 & 0.009 & 6.68 & 0.001 & 7.00 & 0.003 & 7.00 & 0.007 \\
\hline
\end{tabular}
}
\end{table}

\smallskip

\noindent\textbf{Linear Bipartite Matching.~} Figure~\ref{fig:gaps-matching-linear} reports the optimality gaps for the sets of linear instances. As expected, and consistent with the knapsack experiments, the IO method assuming a linear objective performs exceptionally well, achieving the best results among all methods. However, its performance drops substantially, matching that of the random heuristic, when the data is corrupted. Table \ref{tab:gap0-linear-matching} reports the percentage of optimally solved instances for the linear bipartite matching instances.

\begin{table}
\centering
\small
\caption{Percentage (\%) of solutions solved to optimality for the linear instances of the bipartite matching problem.\label{tab:gap0-linear-matching}}
\begin{tabular}{lrrrrr}
\hline
\textbf{Experiment} & \textbf{Transformers} & \textbf{LSTMs} & \textbf{Linear IO} & \textbf{Sorting rule} & \textbf{Random} \\
\hline
Full training set    & 52.10 & 20.40 & 99.70 & \multirow{3}{*}{14.20} & \multirow{3}{*}{0.40} \\
Smaller training set & 25.80 & 18.90 & 97.90 &                       &                       \\
Corrupted data       & 52.30 & 20.80 & 1.70     &                       &                       \\
\hline
\end{tabular}
\end{table}

Transformers achieve the next best performance after IO, maintaining robustness as their results are not significantly affected by corrupted data. LSTMs follow a similar trend to transformers across the three experiments, but consistently yield inferior results. All ML-based methods experience a non-negligible performance decline when trained on reduced datasets. Notably, while the ML-based methods employ a sorting rule to encode inputs and outputs, simply following this sorting rule results in substantially higher optimality gaps.

\begin{figure}[htb]
  \centering
  \includegraphics[width=\textwidth]{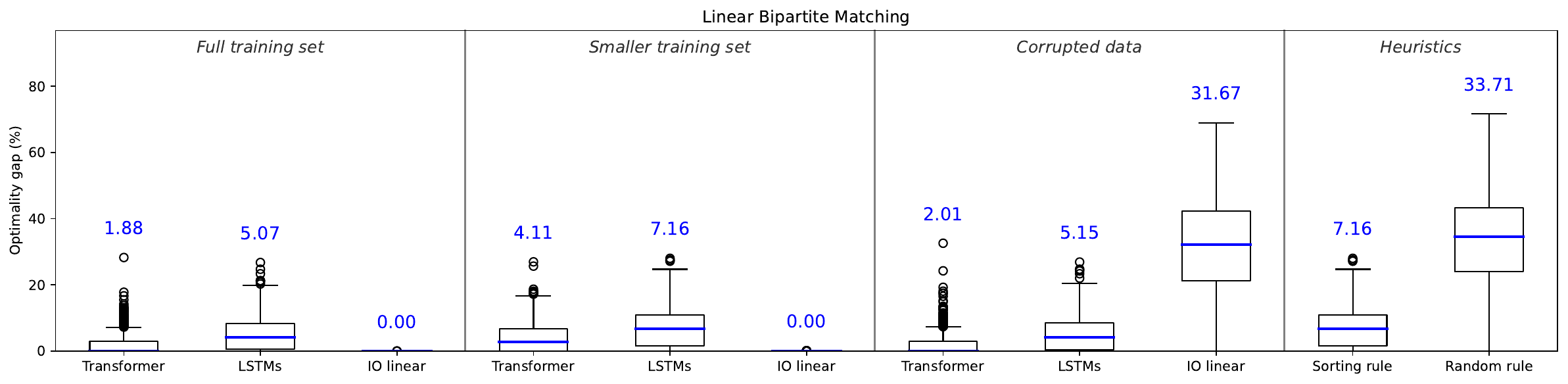}

  \caption{Optimality gaps (\%) for the linear bipartite matching problem.}
  \label{fig:gaps-matching-linear}

  \vspace{0.25em}
  {\footnotesize\emph{Note.} Means are displayed on top of the boxes.}
\end{figure}

\smallskip
\noindent\textbf{Quadratic Bipartite Marching.~} Table \ref{tab:gap0-quadratic-matching} reports the percentage of optimally solved instances for the quadratic bipartite matching instances.

\begin{table}[htb]
\small
\centering
\caption{Percentage (\%) of solutions solved to optimality for the quadratic instances of the bipartite matching problem.\label{tab:gap0-quadratic-matching}}
\begin{tabular}{lrrrrrr}
\hline
\textbf{Experiment} & \textbf{Transformers} & \textbf{LSTMs} & \textbf{Quadratic IO} & \textbf{Linear IO} & \textbf{Sorting rule} & \textbf{Random} \\
\hline
Full training set    & 42.40 & 8.90 & 0.00  & 1.50 & \multirow{3}{*}{4.90} & \multirow{3}{*}{0.20} \\
Smaller training set & 11.60 & 6.80 & 20.10 & 2.40 &                       &                       \\
Corrupted data       & 37.10 & 10.00 & 0.00    & 1.10   &                       &                       \\
\hline
\end{tabular}
\end{table}

\subsubsection{Single-machine Scheduling Problem}
\label{sec:scheduling}

For this application, we solve each instance to optimality using CODD \citep{michel2024codd}, a decision diagram-based solver for combinatorial optimization. We formulate the problem using dynamic programming, enhanced with a state merging mechanism to construct relaxed decision diagrams, and a local bounding function to improve efficiency. We also experiment with IBM ILOG CP Optimizer and Gurobi; however, both solvers prove computationally impractical for the number of required instances, even at moderate problem sizes.

For instances of lengths 10 and 20, we set time limits of 3.5 and 12 hours, respectively, for training in each experiment. Additionally, we allocate a maximum of 2 hours for solution generation. We use the SOAP optimizer to train the machine learning models, with a batch size of 1024 because it works better with larger batch sizes.

\smallskip

\noindent\textbf{Parameter Definition.~} We consider a universe of $100$ jobs and subsets of fixed size $|S|$. We study two different sizes, $|S| = 10, 20$. We have $5$ groups, where the processing times of their jobs are sampled from group-dependent truncated normal distributions, with group means of 1000, 3000, 5000, 7000 and 9000, respectively, standard deviation of 400, and $K = 10^{4}$. Figure~\ref{fig:normal-dist} shows the probability density functions for each group.

\begin{figure}[tb]
  \centering
  \includegraphics[width=0.85\textwidth]{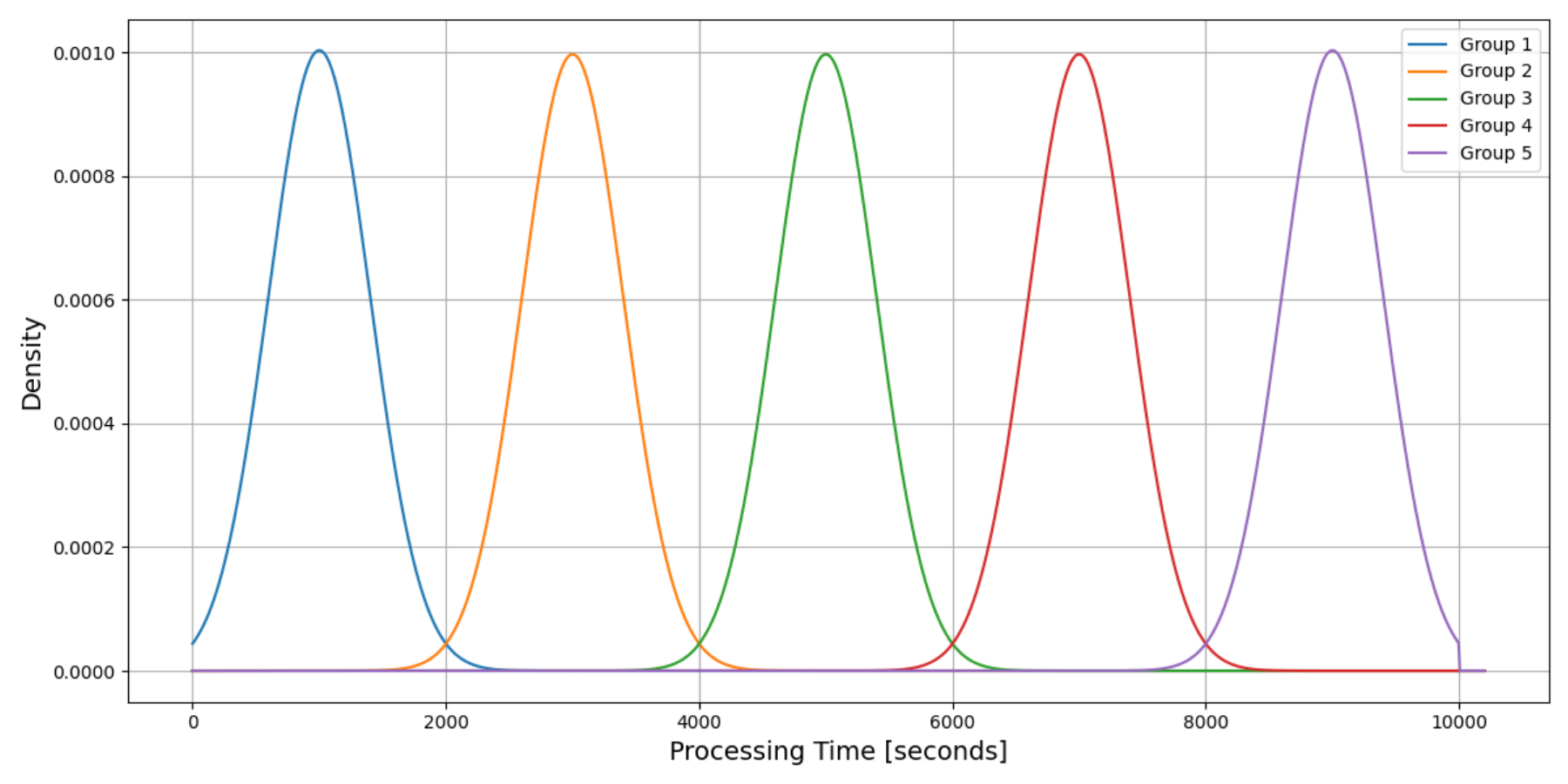}

  \caption{Estimated probability density functions (PDFs) of processing times for each job group.}
  \label{fig:normal-dist}

  \vspace{0.25em}
  {\footnotesize\emph{Note.} Each curve represents a distinct group with different mean processing durations.}
\end{figure}

Table \ref{tab:parameters-scheduling} outlines the configurations used in the experiments. The dataset contains approximately $200,000$ observations. We use $\tau = 0.9$, such that instances were sufficiently difficult.

\begin{table}[htb]
\centering
\caption{Parameter configuration for the machine learning models used in the scheduling application.\label{tab:parameters-scheduling}}
\resizebox{0.8\textwidth}{!}{%
\begin{tabular}{@{}>{\centering\arraybackslash}p{2.2cm} >{\centering\arraybackslash}p{2.2cm} >{\centering\arraybackslash}p{2.3cm} >{\raggedright\arraybackslash}p{3.3cm} >{\raggedleft\arraybackslash}p{2.0cm} >{\raggedleft\arraybackslash}p{2.3cm}@{}}
\hline
\textbf{Method} & \textbf{\begin{tabular}[c]{@{}l@{}}Instance\\ type\end{tabular}} & \textbf{\begin{tabular}[c]{@{}l@{}}Instance\\ length\end{tabular}} & \textbf{Parameter} & \textbf{Value} & \textbf{Number of parameters} \\
\hline
\multirow{5}{*}{Transformers} 
    & \multirow{5}{=}{\begin{tabular}[c]{@{}l@{}}Topologies A\\ and B\end{tabular}} 
    & \multirow{5}{*}{10} 
    & Dimension & 128 & \multirow{5}{*}{5,049,077} \\
    & & & Number of heads & 4 & \\
    & & & Encoder layers & 4 & \\
    & & & Decoder layers & 4 & \\
    & & & Learning rate & $10^{-4}$ & \\
\hline
\multirow{5}{*}{Transformers} 
    & \multirow{5}{=}{Topology C} 
    & \multirow{5}{*}{10} 
    & Dimension & 152 & \multirow{5}{*}{6,167,429} \\
    & & & Number of heads & 4 & \\
    & & & Encoder layers & 4 & \\
    & & & Decoder layers & 4 & \\
    & & & Learning rate & $10^{-4}$ & \\
\hline
\multirow{5}{*}{Transformers} 
    & \multirow{5}{=}{\begin{tabular}[c]{@{}l@{}}Topologies A\\ and B\end{tabular}} 
    & \multirow{5}{*}{20} 
    & Dimension & 152 & \multirow{5}{*}{7,604,277} \\
    & & & Number of heads & 8 & \\
    & & & Encoder layers & 6 & \\
    & & & Decoder layers & 4 & \\
    & & & Learning rate & $10^{-4}$ & \\
\hline
\multirow{5}{*}{Transformers} 
    & \multirow{5}{=}{Topology C} 
    & \multirow{5}{*}{20} 
    & Dimension & 184 & \multirow{5}{*}{9,530,165} \\
    & & & Number of heads & 8 & \\
    & & & Encoder layers & 6 & \\
    & & & Decoder layers & 4 & \\
    & & & Learning rate & $10^{-4}$ & \\
\hline
\multirow{3}{*}{LSTMs} 
    & \multirow{3}{=}{\begin{tabular}[c]{@{}l@{}}Topologies A\\ and B\end{tabular}} 
    & \multirow{3}{*}{10} 
    & Embedding dimension & 128 & \multirow{3}{*}{303,093} \\
    & & & Hidden dimension & 128 & \\
    & & & Learning rate & $5 \times 10^{-4}$ & \\
\hline
\multirow{3}{*}{LSTMs} 
    & \multirow{3}{=}{Topology C} 
    & \multirow{3}{*}{10} 
    & Embedding dimension & 152 & \multirow{3}{*}{417,909} \\
    & & & Hidden dimension & 152 & \\
    & & & Learning rate & $5 \times 10^{-4}$ & \\
\hline
\multirow{3}{*}{LSTMs} 
    & \multirow{3}{=}{\begin{tabular}[c]{@{}l@{}}Topologies A,\\ B and C\end{tabular}} 
    & \multirow{3}{*}{20} 
    & Embedding dimension & 256 & \multirow{3}{*}{1,128,437} \\
    & & & Hidden dimension & 256 & \\
    & & & Learning rate & $5 \times 10^{-4}$ & \\
\hline
\end{tabular}
}
\end{table}

We study three sets of group-based precedence constraints topologies, illustrated in Figure~\ref{fig:precedence-constraints}. Their design is guided by two key considerations. First, we avoid precedence structures that prioritize scheduling shorter jobs (i.e., those with lower group labels) before longer ones. In such cases, the shortest-processing-time-first (SPT) rule would produce the optimal solution, reducing the complexity of the learning task. To mitigate this, we assign alternating group labels that decouple processing time from precedence. Second, we design the constraint sets to exhibit increasing levels of difficulty, based on structural patterns known to challenge transformer models. In particular, we vary (i) the number of AND-type dependencies, (ii) the number of valid topological orderings, and (iii) the presence of fan-in structures, i.e., nodes with multiple predecessors. Prior work shows that transformers struggle with learning conjunctive (AND) semantics \citep{saha2020conjnli}, especially in the presence of fan-in structures and limited flexibility in valid orderings.

\begin{figure}[tb]
  \centering
  \includegraphics[width=0.85\textwidth]{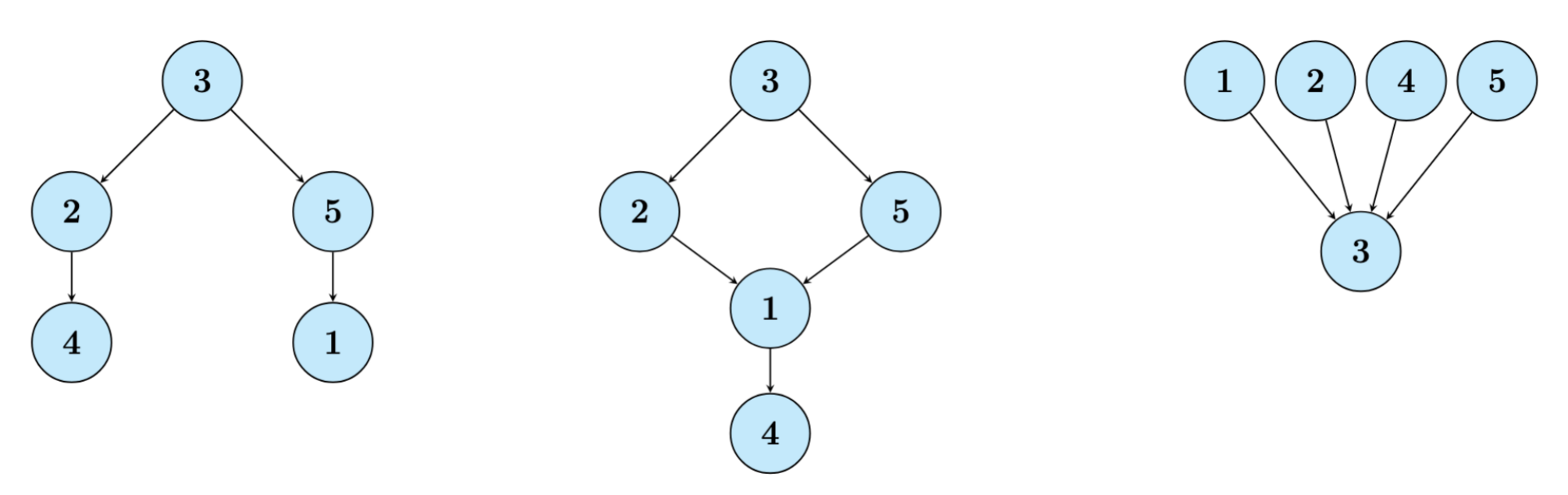}

  \caption{Group-based precedence constraints represented as directed acyclic graphs.}
  \label{fig:precedence-constraints}

  \vspace{0.25em}
  {\footnotesize\emph{Note.} Each diagram corresponds to a different set of precedence rules. From left to right: Topology A, B, and C.}
\end{figure}

The first constraint set (Topology A) consists solely of fan-out relationships and contains no AND dependencies. The second set (Topology B) introduces a single AND clause and reduces the number of valid group orderings. The third set (Topology C) presents the highest complexity: it features a fan-in structure with four AND clauses. Although more group sequences may appear feasible in this final set, correctly modeling the conjunction logic poses a significantly greater challenge to the model.

For the IO cutting-plane algorithm, we adopt the parameter values proposed by~\cite{bodur2022inverse}, with the exception of $p_0$, which we set to 500. Additionally, we use $\lambda = 10$. We also use the early stopping heuristic proposed in the paper by setting a time limit of 300 seconds for the forward problems.

\medskip

\noindent \textbf{Computational Speed.~} Table~\ref{tab:training-times-scheduling} reports training and solution generation times for the scheduling instances. For the ML–based methods, inference time scales primarily with instance length (number of jobs) and is largely insensitive to instance type. In contrast, the IO approach exhibits substantial variability, even at a fixed length and instance type, because the coefficients estimated via the cutting-plane algorithm can induce forward problems that are harder to solve.


\medskip

\begin{table}[htb]
\centering
\caption{Training (hours) and mean inference (seconds) times of competing methods on the scheduling problem instances.\label{tab:training-times-scheduling}}
\resizebox{\textwidth}{!}{%
\begin{tabular}{@{}lclrrrrrrrrrr@{}}
\hline
\multicolumn{1}{c}{\multirow{2}{*}{\textbf{\begin{tabular}[c]{@{}c@{}}Precedence\\ constraints\end{tabular}}}} &
\multirow{2}{*}{\textbf{\begin{tabular}[c]{@{}c@{}}Instance\\ length\end{tabular}}} &
\multicolumn{1}{c}{\multirow{2}{*}{\textbf{Experiment}}} &
\multicolumn{2}{c}{\textbf{Transformers}} &
\multicolumn{2}{c}{\textbf{LSTMs}} &
\multicolumn{2}{c}{\textbf{\begin{tabular}[c]{@{}c@{}}IO:\\ Linear\end{tabular}}} &
\multicolumn{2}{c}{\textbf{\begin{tabular}[c]{@{}c@{}}IO:\\ Jobs\end{tabular}}} &
\multicolumn{2}{c}{\textbf{\begin{tabular}[c]{@{}c@{}}IO:\\ Groups\end{tabular}}} \\
\cline{4-13}
\multicolumn{1}{c}{} &  & \multicolumn{1}{c}{} &
\multicolumn{1}{c}{\textbf{Training}} & \multicolumn{1}{c}{\textbf{Inference}} &
\multicolumn{1}{c}{\textbf{Training}} & \multicolumn{1}{c}{\textbf{Inference}} &
\multicolumn{1}{c}{\textbf{Training}} & \multicolumn{1}{c}{\textbf{Inference}} &
\multicolumn{1}{c}{\textbf{Training}} & \multicolumn{1}{c}{\textbf{Inference}} &
\multicolumn{1}{c}{\textbf{Training}} & \multicolumn{1}{c}{\textbf{Inference}} \\
\hline
\multirow{2}{*}{Topology A} & 10 & Full training set
& 1.51 & 0.013 & 0.64 & 0.001 & 3.50 & 0.010 & 3.50 & 0.038 & 3.50 & 0.008 \\
& 10 & Smaller training set
& 0.76 & 0.013 & 0.44 & 0.001 & 3.50 & 0.011 & 3.50 & 0.038 & 3.50 & 0.025 \\
\hline
\multirow{2}{*}{Topology B} & 10 & Full training set
& 1.56 & 0.013 & 1.00 & 0.001 & 3.50 & 0.010 & 3.50 & 0.041 & 3.50 & 0.008 \\
& 10 & Smaller training set
& 0.79 & 0.013 & 0.77 & 0.001 & 3.50 & 0.011 & 3.50 & 0.036 & 3.50 & 0.021 \\
\hline
\multirow{2}{*}{Topology C} & 10 & Full training set
& 2.61 & 0.013 & 0.36 & 0.001 & 3.50 & 0.013 & 3.50 & 0.039 & 3.50 & 0.036 \\
& 10 & Smaller training set
& 0.83 & 0.013 & 0.56 & 0.001 & 3.50 & 0.009 & 3.50 & 0.038 & 3.50 & 0.035 \\
\hline
\multirow{2}{*}{Topology A} & 20 & Full training set
& 9.69 & 0.025 & 1.26 & 0.002 & 12.00 & 0.600 & 12.00 & 144.000 & 12.00 & 0.048 \\
& 20 & Smaller training set
& 1.42 & 0.025 & 1.15 & 0.002 & 12.00 & 0.120 & 12.00 & 1.042 & 12.00 & 0.048 \\
\hline
\multirow{2}{*}{Topology B} & 20 & Full training set
& 4.31 & 0.025 & 2.07 & 0.002 & 12.00 & 0.587 & 12.00 & 257.142 & 12.00 & 0.064 \\
& 20 & Smaller training set
& 1.61 & 0.025 & 1.98 & 0.002 & 12.00 & 0.122 & 12.00 & 1.113 & 12.00 & 0.043 \\
\hline
\multirow{2}{*}{Topology C} & 20 & Full training set
& 2.97 & 0.025 & 1.08 & 0.002 & 12.00 & 2.402 & 12.00 & 3.209 & 12.00 & 46.153 \\
& 20 & Smaller training set
& 2.14 & 0.025 & 1.15 & 0.002 & 12.00 & 7.207 & 12.00 & 1.338 & 12.00 & 2.199 \\
\hline
\end{tabular}
}
\end{table}

\noindent \textbf{Results.~} Table~\ref{tab:precedence-results} reports the percentages of precedence constraint satisfaction for all methods across the different experiments and instances. ML-based methods substantially outperform the IO methods, even though they are trained without any information about the precedence constraints. Transformer models generally achieve the highest satisfaction rates among all approaches.


In contrast, for the IO method, the presence of precedence constraints is hinted at through the use of the second and third objective functions. Notably, the method performs comparatively better on Topology C instances than on the others, likely because it only needs to learn to assign smaller coefficients to jobs in group 3, whereas in the other topologies this relationship is less straightforward.

\begin{table}
\centering
\small
\caption{Percentage (\%) of solutions that completely satisfy precedence constraints for the single-machine scheduling problem.\label{tab:precedence-results}}
\resizebox{\textwidth}{!}{%
\begin{tabular}{@{}lclrrrrrrr@{}}
\hline
\textbf{\begin{tabular}[c]{@{}l@{}}Precedence\\constraints\end{tabular}} & 
\textbf{\begin{tabular}[c]{@{}c@{}}Instance\\length\end{tabular}} & 
\textbf{Experiment} & 
\textbf{Transformers} & 
\textbf{LSTMs} & 
\textbf{\begin{tabular}[c]{@{}c@{}}IO:\\Linear\end{tabular}} & 
\textbf{\begin{tabular}[c]{@{}c@{}}IO:\\Jobs\end{tabular}} & 
\textbf{\begin{tabular}[c]{@{}c@{}}IO:\\Groups\end{tabular}} & 
\textbf{Random} \\
\hline
\multirow{2}{*}{Topology A}  
    & 10 & Full training set       & 100.00 & 99.60 & 1.20 & 3.10  & 2.40  & \multirow{2}{*}{0.90} \\
    & 10 & Smaller training set    & 99.80  & 93.40 & 1.90 & 1.90 & 45.10 &                         \\
\hline
\multirow{2}{*}{Topology B} 
    & 10 & Full training set       & 100.00 & 99.90 & 1.40 & 2.20  & 1.70  & \multirow{2}{*}{1.20} \\
    & 10 & Smaller training set    & 100.00 & 95.50 & 1.00 & 2.40  & 16.90 &                         \\
\hline
\multirow{2}{*}{Topology C}  
    & 10 & Full training set       & 96.50  & 100.00 & 30.30 & 60.90 & 99.60 & \multirow{2}{*}{13.20} \\
    & 10 & Smaller training set    & 100.00 & 97.00 & 11.10 & 61.90 & 65.10 &                         \\
\hline
\multirow{2}{*}{Topology A}  
    & 20 & Full training set       & 100.00 & 99.20 & 0.00 & \begin{tabular}[c]{@{}r@{}}0.00\\(5.00\%)\end{tabular}  & 6.90  & \multirow{2}{*}{0.00} \\
    & 20 & Smaller training set    & 99.30  & 90.60 & 0.00 & 0.00 & 1.80  &                         \\
\hline
\multirow{2}{*}{Topology B} 
    & 20 & Full training set       & 99.70  & 100.00 & 0.00 & \begin{tabular}[c]{@{}r@{}}0.00\\(2.80\%)\end{tabular} & 21.80 & \multirow{2}{*}{0.00} \\
    & 20 & Smaller training set    & 100.00 & 97.10  & 0.00 & 0.00 & 0.40  &                         \\
\hline
\multirow{2}{*}{Topology C}  
    & 20 & Full training set       & 99.80  & 100.00 & 49.40 & 38.70 & \begin{tabular}[c]{@{}r@{}}0.64\\(15.60\%)\end{tabular} & \multirow{2}{*}{0.50} \\
    & 20 & Smaller training set    & 100.00 & 97.30 & 43.30 & 34.40 & 49.00 &                         \\
\hline
\end{tabular}
}
\caption*{\footnotesize The percentage of solved instances shown in parentheses when it is less than 100\%. For those instances, the percentage reported is among the solved solutions.}
\end{table}

Figures~\ref{fig:edit-boxplot-10} and ~\ref{fig:edit-boxplot-20} present the optimality gaps for instances of length 10 and 20, respectively. Transformers achieve by far the smallest edit distances, followed by LSTMs. Consistent with the results on precedence constraint satisfaction, the IO method performs substantially better on Topology C instances. Interestingly, Topology C is also the topology where ML-based methods struggle the most. As previously noted, this topology poses a particular challenge for transformers due to its multiple AND clauses and fan-in structure.

\begin{figure}[htb]
  \centering

  \includegraphics[width=\textwidth]{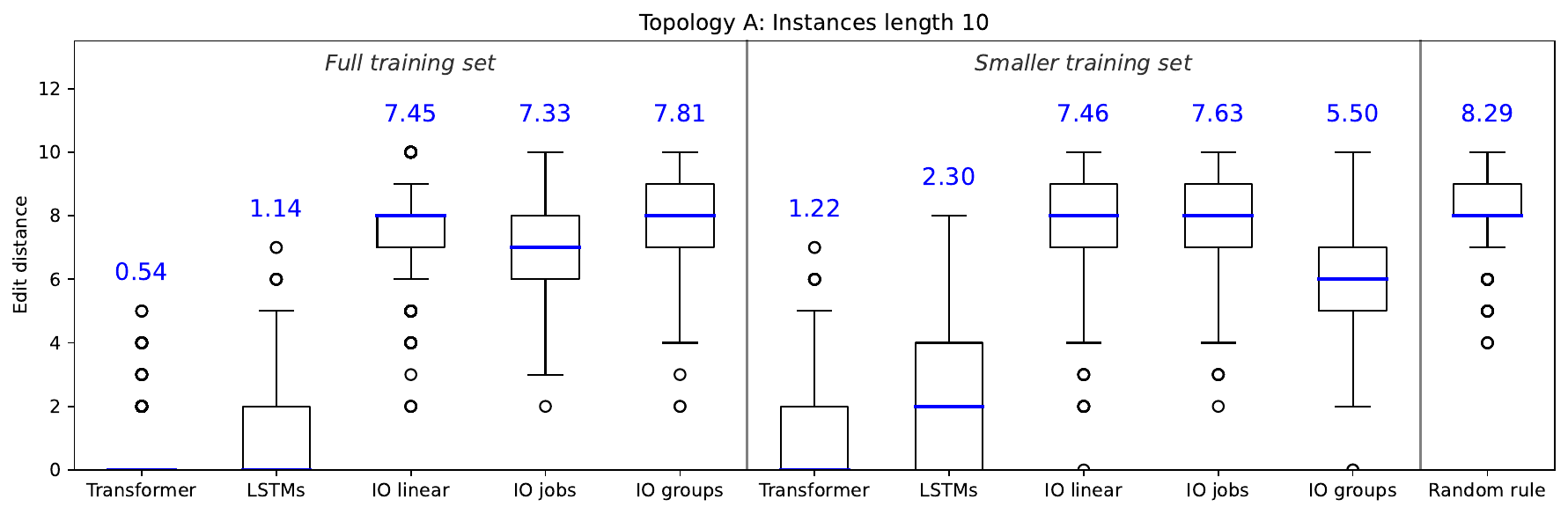}

  \vspace{0.8em}

  \includegraphics[width=\textwidth]{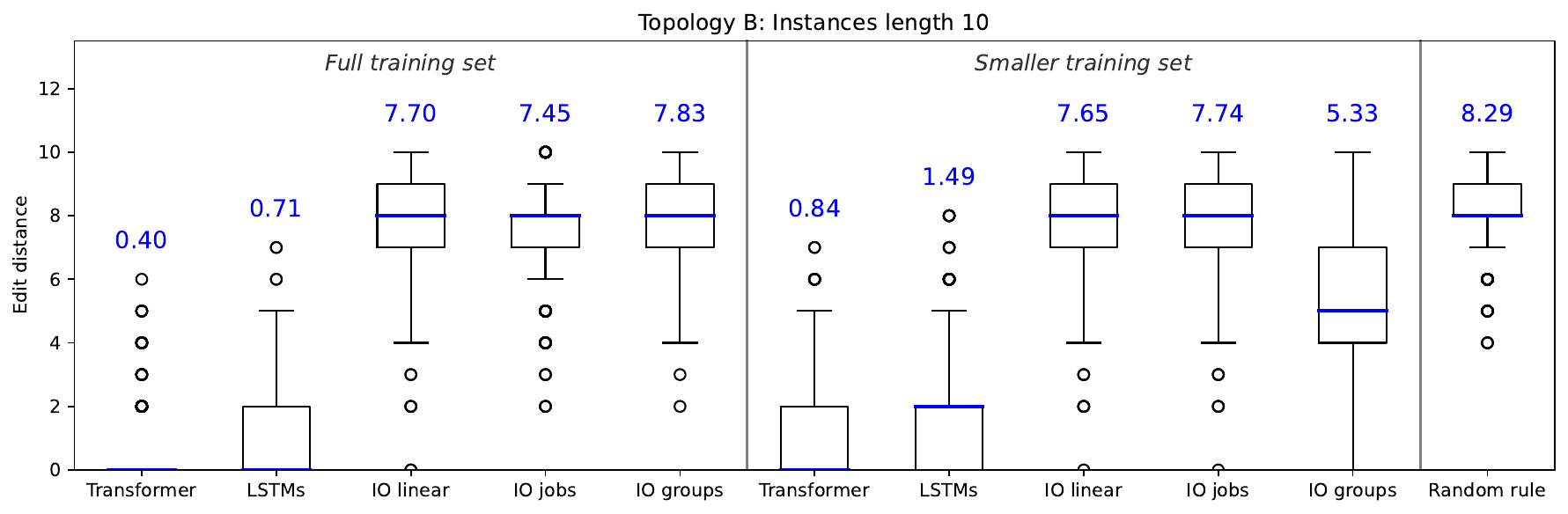}

  \vspace{0.8em}

  \includegraphics[width=\textwidth]{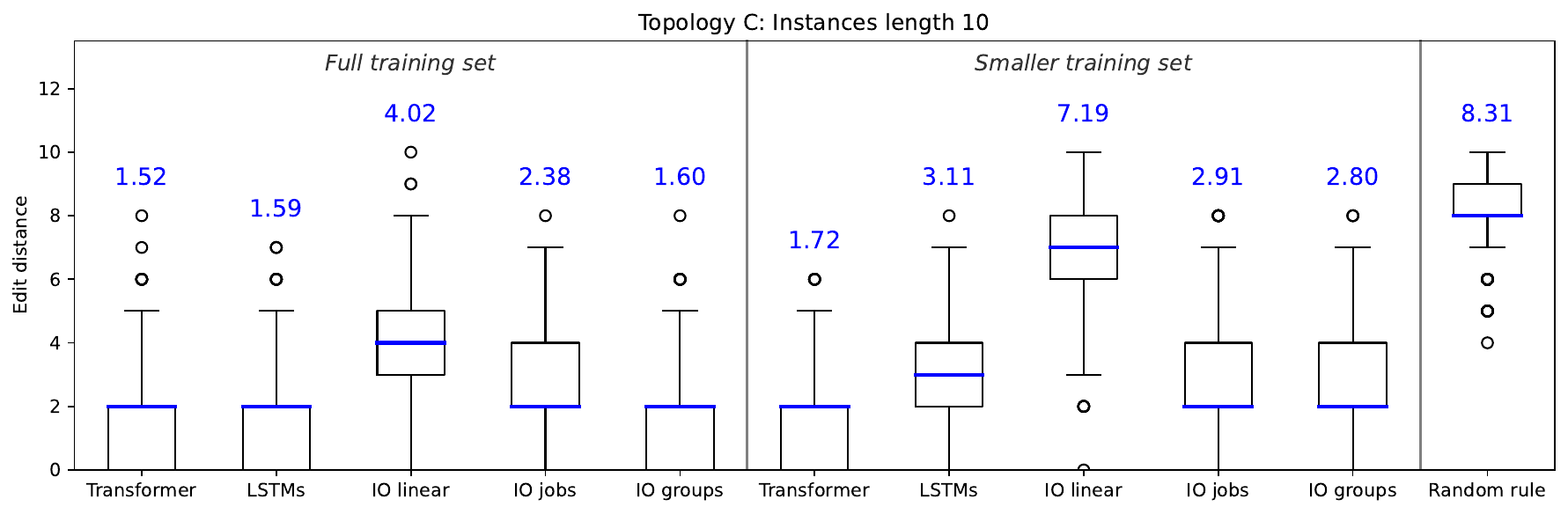}

  \caption{Edit distances length 10 for the single-machine scheduling problem.}
  \label{fig:edit-boxplot-10}

  \vspace{0.25em}
  {\footnotesize\emph{Note.} Means are displayed on top of the boxes.}
\end{figure}

\begin{figure}[htb]
  \centering

  \includegraphics[width=\textwidth]{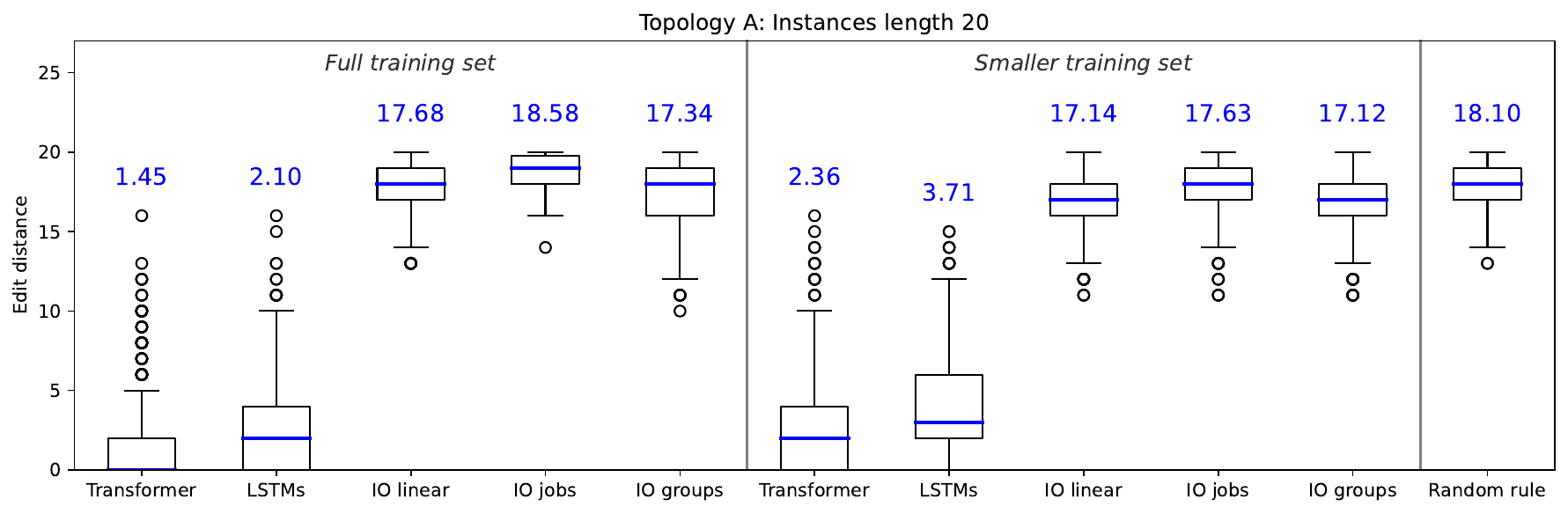}

  \vspace{0.7em}

  \includegraphics[width=\textwidth]{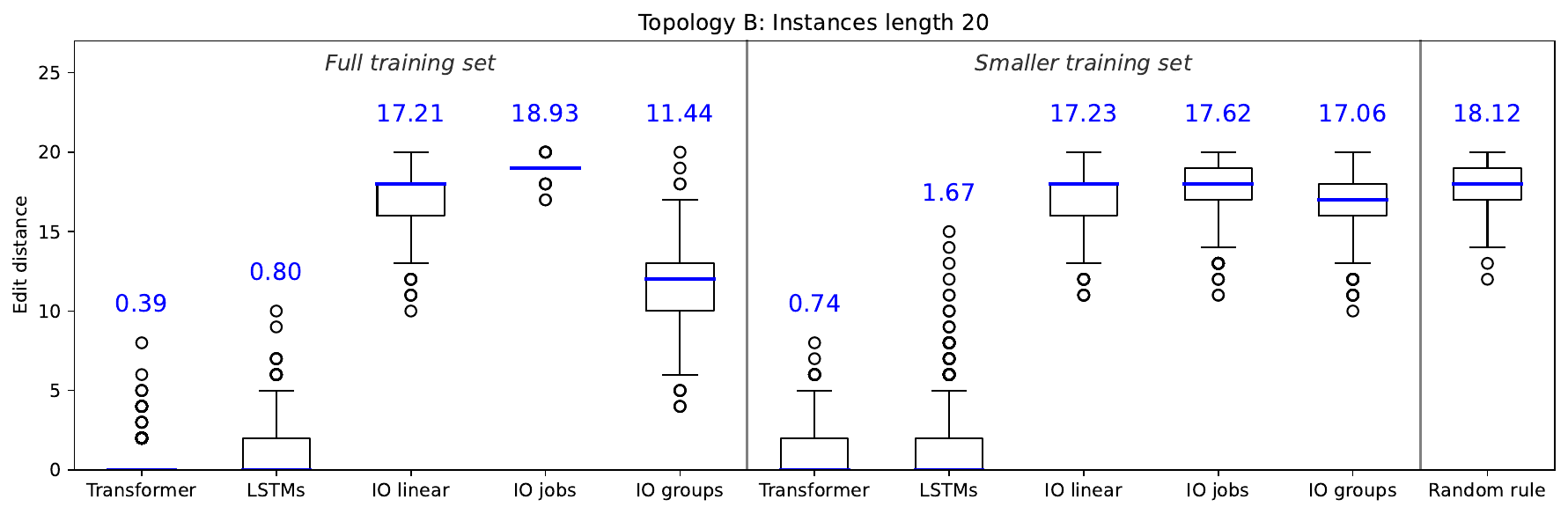}

  \vspace{0.7em}

  \includegraphics[width=\textwidth]{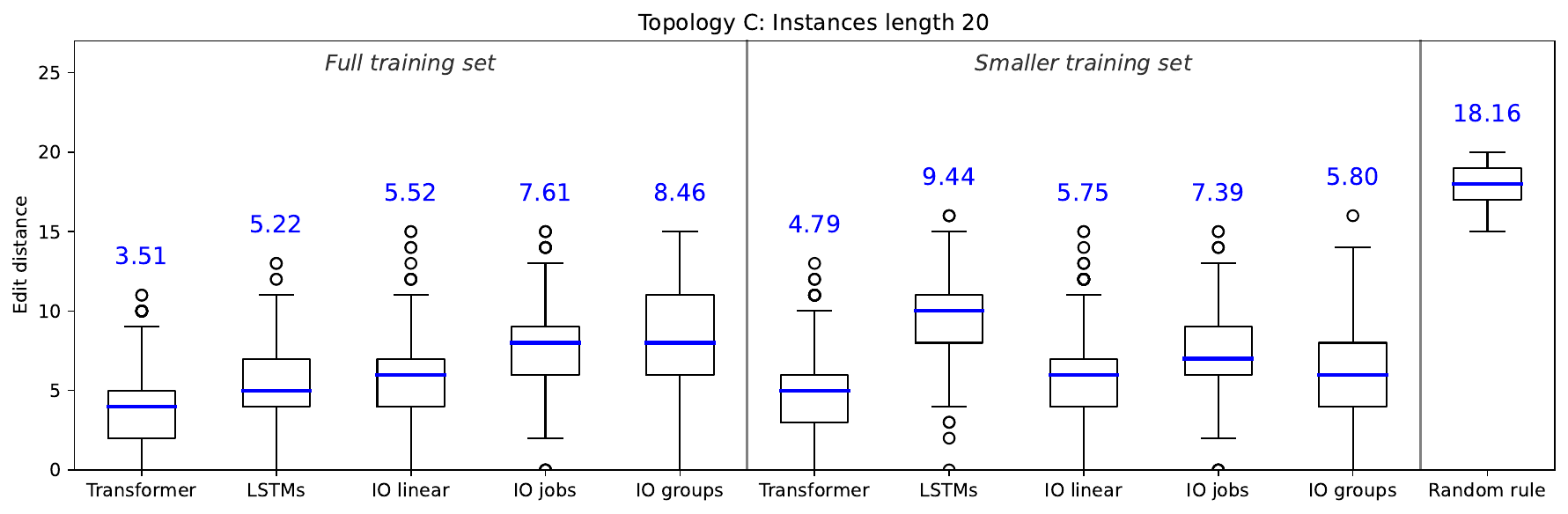}

  \caption{Edit distances length 20 for the single-machine scheduling problem.}
  \label{fig:edit-boxplot-20}

  \vspace{0.25em}
  {\footnotesize\emph{Note.} Means are displayed on top of the boxes.}
\end{figure}

Although the edit distance metric serves as a useful proxy for method performance, our primary goal is not to exactly replicate the optimal solutions. Since the task is an optimization problem, the focus is on satisfying precedence constraints and achieving small optimality gaps by learning from the optimal solutions. Figure~\ref{fig:gaps-boxplot-10} and and Figure~\ref{fig:gaps-boxplot-20} present the optimality gaps for instances of length 10 and 20, respectively. The performance trends mirror those observed with the other metrics.

\begin{figure}[htb]
  \centering

  \includegraphics[width=\textwidth]{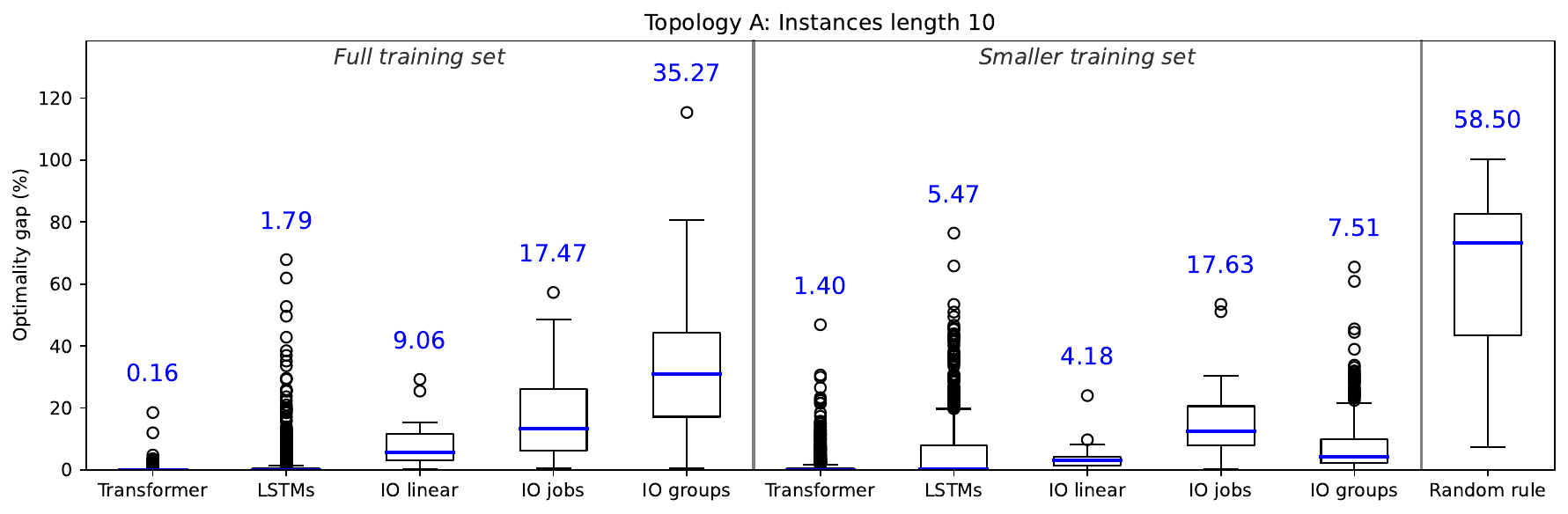}

  \vspace{0.7em}

  \includegraphics[width=\textwidth]{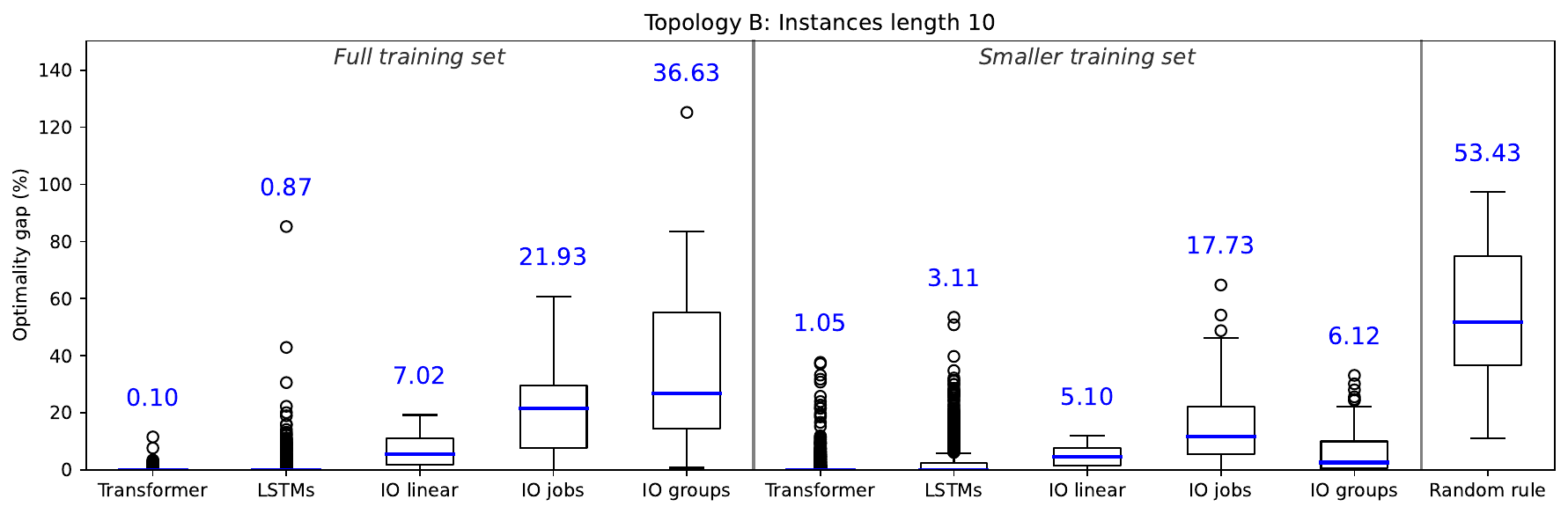}

  \vspace{0.7em}

  \includegraphics[width=\textwidth]{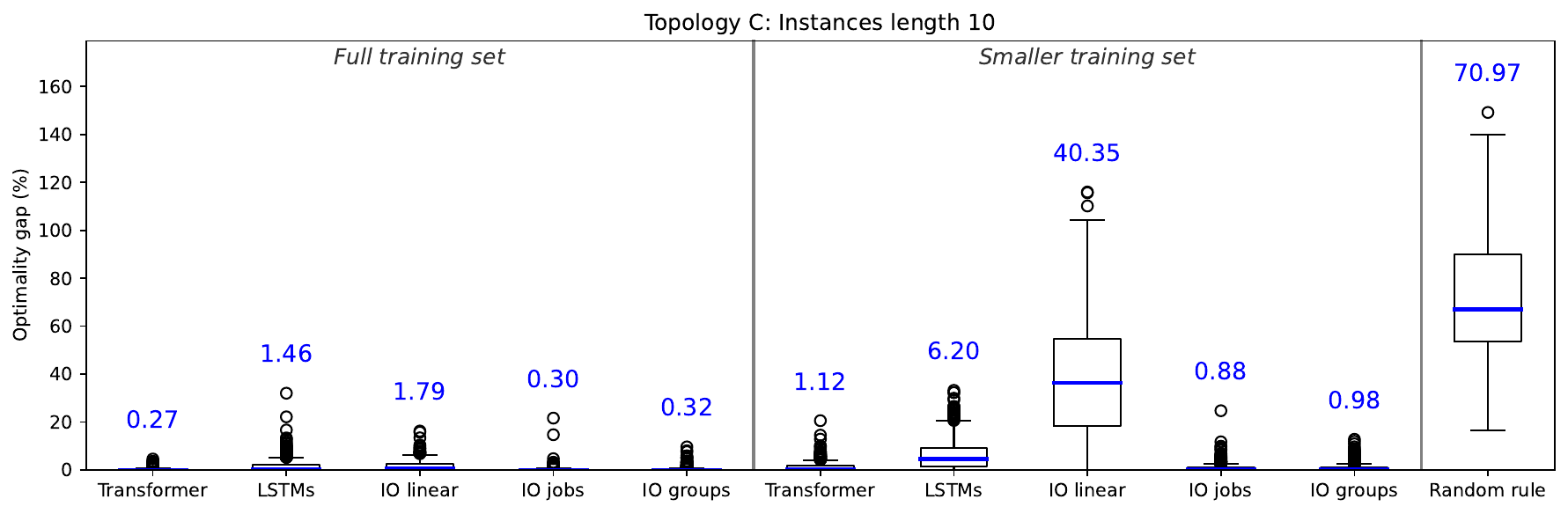}

  \caption{Optimality gaps (\%) for instances of length 10 for the single-machine scheduling problem.}
  \label{fig:gaps-boxplot-10}

  \vspace{0.25em}
  {\footnotesize\emph{Note.} Means are displayed on top of the boxes. These results only consider solutions that satisfy precedence constraints.}
\end{figure}

\begin{figure}[htb]
  \centering

  \includegraphics[width=\textwidth]{figs/gaps_type1_20_scheduling_style.pdf}

  \vspace{0.7em}

  \includegraphics[width=\textwidth]{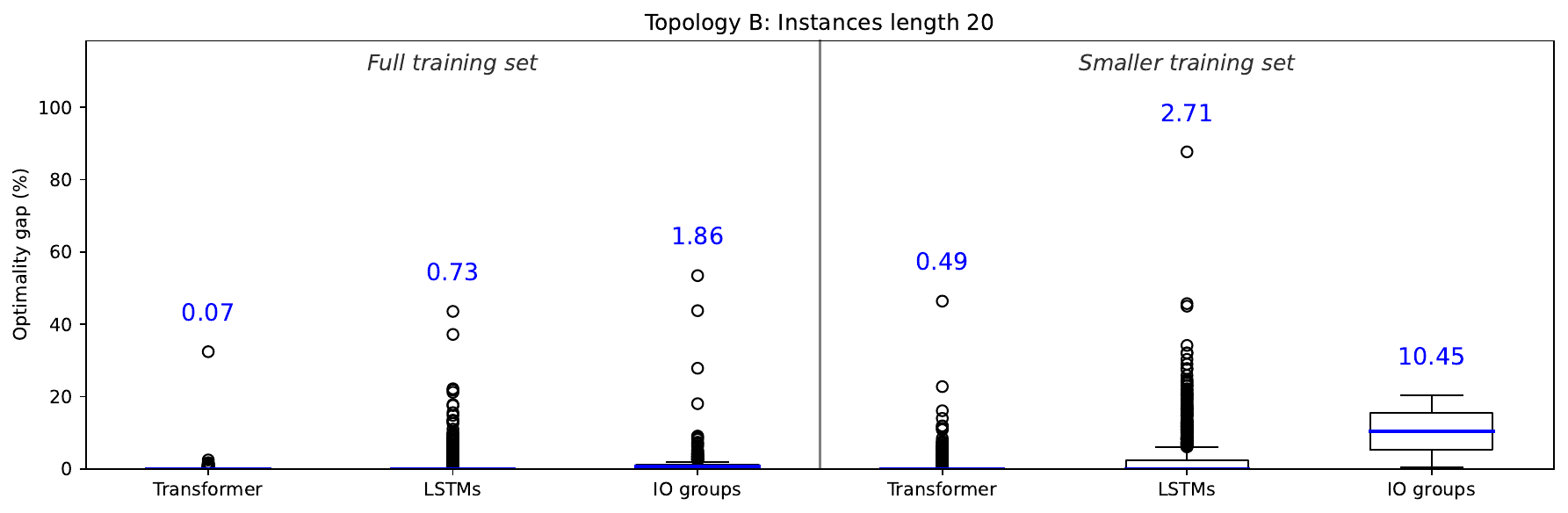}

  \vspace{0.7em}

  \includegraphics[width=\textwidth]{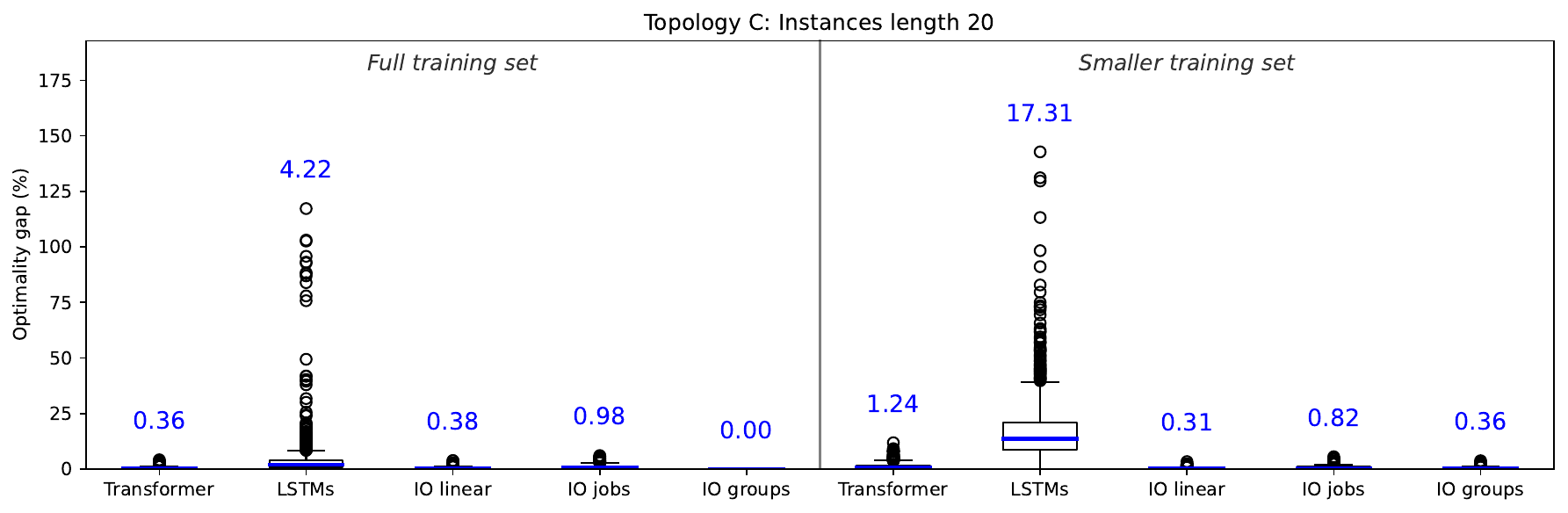}

  \caption{Optimality gaps (\%) for instances of length 20 for the single-machine scheduling problem.}
  \label{fig:gaps-boxplot-20}

  \vspace{0.25em}
  {\footnotesize\emph{Note.} Means are displayed on top of the boxes. These results only consider solutions that satisfy precedence constraints. We do not report the boxplot for the linear IO approach for type A and type B instances because it did not generate any feasible solutions.}
\end{figure}

Additionally, Table \ref{tab:gap0-scheduling} reports the percentage of optimally solved instances for the different topologies and configurations.

\clearpage
\noindent
\begin{minipage}[t]{\textwidth}
\centering
\small
\captionof{table}{Percentage (\%) of generated solutions solved to optimality for the single-machine scheduling problem with respect to the testing set.\label{tab:gap0-scheduling}}

\begin{tabular}{@{}lclrrrrrrr@{}}
\hline
\textbf{\begin{tabular}[c]{@{}l@{}}Precedence\\constraints\end{tabular}} & 
\textbf{\begin{tabular}[c]{@{}c@{}}Instance\\length\end{tabular}} & 
\textbf{Experiment} & 
\textbf{Transformers} & 
\textbf{LSTMs} & 
\textbf{\begin{tabular}[c]{@{}c@{}}IO:\\First\end{tabular}} & 
\textbf{\begin{tabular}[c]{@{}c@{}}IO:\\Second\end{tabular}} & 
\textbf{\begin{tabular}[c]{@{}c@{}}IO:\\Third\end{tabular}} & 
\textbf{Random} \\
\hline
\multirow{2}{*}{Topology A}  
    & 10 & Full training set       & 77.00 & 57.00 & 0.00 & 0.00 & 0.00  & \multirow{2}{*}{0.00} \\
    & 10 & Smaller training set    & 54.60 & 29.10 & 0.10 & 0.00  & 0.20  &                         \\
\hline
\multirow{2}{*}{Topology B} 
    & 10 & Full training set       & 82.80 & 71.50 & 0.20 & 0.00  & 0.00  & \multirow{2}{*}{0.00} \\
    & 10 & Smaller training set    & 67.60 & 48.30 & 0.10 & 0.10  & 0.80  &                         \\
\hline
\multirow{2}{*}{Topology C}  
    & 10 & Full training set       & 38.50 & 39.50  & 5.80 & 23.30  & 33.90 & \multirow{2}{*}{0.00} \\
    & 10 & Smaller training set    & 35.70 & 9.80  & 0.10 & 14.30  & 16.70 &                         \\
\hline
\multirow{2}{*}{Topology A}  
    & 20 & Full training set       & 59.20 & 49.20  & * & \begin{tabular}[c]{@{}r@{}}*\\(5.00\%)\end{tabular}  & 0.00  & \multirow{2}{*}{*} \\
    & 20 & Smaller training set    & 42.30 & 21.60  & *  & *  & 0.00  &                         \\
\hline
\multirow{2}{*}{Topology B} 
    & 20 & Full training set       & 83.40 & 73.10  & *  & \begin{tabular}[c]{@{}r@{}}*\\(2.80\%)\end{tabular}  & 0.00  & \multirow{2}{*}{*} \\
    & 20 & Smaller training set    & 72.60 & 51.90  & *  & * & 0.00  &                         \\
\hline
\multirow{2}{*}{Topology C}  
    & 20 & Full training set       & 13.70 & 3.80  & 2.80 & 0.50 & \begin{tabular}[c]{@{}r@{}}0.10\\(15.60\%)\end{tabular} & \multirow{2}{*}{0.00} \\
    & 20 & Smaller training set    & 5.60  & 0.10  & 2.40 & 0.50 & 2.70  &                         \\
\hline
\end{tabular}

\par\medskip
{\footnotesize The percentage of solved instances is shown in parentheses when it is less than 100\%. * corresponds to experiments where the method did not generate any solution that satisfies precedence constraints and, therefore, we did not compute their optimality gap.}
\end{minipage}

\end{document}